\documentclass{article}

\usepackage[english]{babel}

\usepackage{amssymb,amsmath,amsfonts,amssymb,amsthm}
\usepackage{graphics,graphicx,color,hyperref}

\usepackage{subfig}
\usepackage{xcolor}
\usepackage[font=small]{caption}


\providecommand{\norm}[1]{\lVert#1\rVert}
\theoremstyle{plain}
\newtheorem{theorem}{Theorem}
\newtheorem{lemma}{Lemma}
\newtheorem{proposition}{Proposition}

\newtheorem*{mainthm}{Theorem \ref{thm:main}}
\theoremstyle{definition}
\newtheorem{definition}{Definition}
\theoremstyle{remark}
\newtheorem{remark}{Remark}

\def \Rm {\mathbb R}

\newcommand{\eps}{\varepsilon}
\renewcommand{\epsilon}{\varepsilon}

\newcommand{\dsum}{\displaystyle\sum}

\newcommand{\pdrr}[2]{\dfrac{\partial^2{#1}}{\partial{#2}^2}}

\newcommand{\vp}{\varphi}

\newcommand{\cout}[1]{}

\newcommand{\Xu}{X^1}
\newcommand{\Xd}{X^2}
\newcommand{\Xii}{X^i}

\title{Critical Points for Elliptic Equations with Prescribed Boundary Conditions\thanks{G.\ S.\ Alberti acknowledges support from the ETH Z\"urich Postdoctoral Fellowship Program as well as from the Marie Curie Actions for People COFUND Program. G. Bal acknowledges partial support from the National Science Foundation and from the Office of Naval Research. }}


\author{Giovanni S.\ Alberti \thanks{Dipartimento di Matematica, Universit\`a di Genova, Via Dodecaneso 35, 16146 Genova, Italy. Email: alberti@dima.unige.it.}
        \and Guillaume Bal \thanks{Department of Applied Physics and
        Applied Mathematics, Columbia University,
        New York NY, 10027, USA. Email: gb2030@columbia.edu.}
        \and Michele Di Cristo \thanks{Dipartimento di Matematica, Politecnico di Milano, Piazza Leonardo da Vinci 32, 20133 Milano, Italy. Email: michele.dicristo@polimi.it.}}
        
\date{February 3, 2017}

\begin{document}

\maketitle

\begin{abstract}This paper concerns the existence of critical points for solutions to second order elliptic equations of the form $\nabla\cdot \sigma(x)\nabla u=0$ posed on a bounded domain $X$ with {\em prescribed} boundary conditions. In spatial dimension $n=2$, it is known that the number of critical points (where $\nabla u=0$) is related to the number of oscillations of the boundary condition  independently of the (positive) coefficient $\sigma$. We show that the situation is  different in dimension $n\geq3$. More precisely, we obtain that for any fixed (Dirichlet or Neumann) boundary condition for $u$ on $\partial X$, there exists an open set of smooth coefficients $\sigma(x)$ such that $\nabla u$ vanishes at least at one point in $X$.
By using estimates related to the Laplacian with mixed boundary conditions, the result is first obtained for a piecewise constant conductivity with infinite contrast, a problem of independent interest. A second step shows that the topology of the vector field $\nabla u$ on a subdomain is not modified for appropriate bounded, sufficiently high-contrast, smooth coefficients $\sigma(x)$.

These results find applications in the class of hybrid inverse problems, where optimal stability estimates for parameter reconstruction are obtained in the absence of critical points. Our results show that for any (finite number of) prescribed boundary conditions, there are coefficients $\sigma(x)$ for which the stability of the reconstructions will inevitably degrade.
\end{abstract}

\noindent{\bf Keywords}: elliptic equations, critical points, hybrid inverse problems. 
\vspace{2mm}

\noindent\textbf{MSC (2010)}: 35J25, 35B38, 35R30.
\section{Introduction}

Consider a bounded Lipschitz domain $X\subset\Rm^n$ and a prescribed boundary condition $g\in C^0(\partial X)\cap H^{\frac{1}{2}}(\partial X)$.
 We want to assess the existence of coefficients $\sigma(x)$ (referred to as {\em conductivities}) so that the solution $u$ of the following elliptic problem
\begin{equation}
\label{eq:ell}
 -\nabla\cdot\sigma\nabla u =0 \quad \mbox{ in } X,\qquad u=g \quad\mbox{ on } \partial X
\end{equation}
admits at least one critical point $x\in X$, i.e.\  $\nabla u(x)=0$.

The analysis of this problem is markedly different in dimension $n=2$ and dimensions $n\geq3$. In the former case, it is indeed known that critical points are isolated and their number is given by the number of oscillations of $g$ minus one, independently of the coefficient $\sigma(x)$ (bounded above and below by positive constants and of class $C^{0,\alpha}$); see \cite{alessandrinimagnanini1994,A-AMPA-86}. This no longer holds in dimension $n\geq3$, where the set of critical points can be quite complicated \cite{CF-JDE-85,HaHoHoNa-JDG99}. However, as far as the authors are aware, it has not been known whether it is possible to construct boundary values independently of $\sigma$ so that the corresponding solutions do not have critical points. The main contribution of this paper is a negative answer to this question.
\begin{theorem}\label{thm:main}
Let $X\subset\Rm^3$ be a bounded Lipschitz domain. Take $g\in C(\partial X)\cap H^{\frac12}(\partial X)$. Then there exists a nonempty open set of conductivities $\sigma\in C^\infty(\overline{X})$, $\sigma\ge 1/2$, such that the solution $u\in H^1(X)$ to
\begin{equation*}
 -\nabla\cdot\sigma\nabla u =0 \quad \mbox{ in } X,\qquad u=g \quad\mbox{ on } \partial X
\end{equation*}
has a critical point in $X$, namely $\nabla u(x)=0$ for some $x\in X$ (depending on $\sigma$).
\end{theorem}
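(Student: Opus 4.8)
The plan is to realize the critical point through a high-contrast conducting inclusion and then to protect it by a topological-degree argument, following the two-step scheme announced in the abstract. I would first dispose of the easy case: if for $\sigma\equiv 1$ the homogeneous solution $u_0$ already has a zero of $\nabla u_0$ at which the index is nonzero, then the degree stability used below produces an open neighbourhood of $\sigma\equiv 1$ on which a critical point persists, and we are done. So I may assume $\nabla u_0\neq 0$ throughout $X$, and the real task becomes to manufacture a \emph{robust} zero of $\nabla u$ by choosing $\sigma$ with large contrast while keeping $\sigma\ge 1/2$.

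\textbf{First step (infinite contrast / mixed boundary conditions).} I would insert a perfectly conducting inclusion $D\Subset X$ --- the $\sigma\to\infty$ limit of a high-contrast inclusion (or, more flexibly, a conducting shell surrounding a shielded cavity). In the limit $u$ is forced to be constant, $u\equiv c$, on $\overline D$, with $c$ fixed by the vanishing of the net flux $\int_{\partial D}\partial_\nu u=0$; outside $D$ the function $u$ is harmonic with datum $g$ on $\partial X$ and the a priori unknown constant value $c$ on $\partial D$. The quantitative control of $\nabla u$ near $\partial D$ comes precisely from estimates for the Laplacian with mixed boundary conditions, as flagged in the abstract. The aim of this step is to exhibit a closed surface $\Sigma\subset\{\sigma=1\}$ enclosing a subdomain $\Omega$ on which $\nabla u$ does not vanish and the Kronecker degree
\[
\deg\!\Big(\tfrac{\nabla u}{\abs{\nabla u}},\,\Sigma\Big)\neq 0,
\]
which by Poincar\'e--Hopf forces a zero of $\nabla u$ inside $\Omega$. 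This is the self-contained infinite-contrast result.

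\textbf{Second step (finite contrast / persistence of the topology).} I would then approximate the perfect conductor by smooth coefficients $\sigma_\eps\in C^\infty(\overline X)$, $\sigma_\eps\ge 1/2$, equal to $1$ on $\Omega$ and tending to $+\infty$ on $D$, with corresponding solutions $u_\eps$. Using the same mixed-boundary-condition estimates together with interior elliptic regularity on the fixed region $\{\sigma=1\}\supset\overline\Omega$, I would show that $u_\eps\to u$ in $C^1$ on a neighbourhood of $\Sigma$, so that for $\eps$ small $\nabla u_\eps$ still does not vanish on $\Sigma$ and carries the same nonzero degree there. Poincar\'e--Hopf again yields a critical point of $u_\eps$ in $\Omega$. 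Finally, since $\sigma\mapsto\nabla u_\sigma|_\Sigma$ is continuous into $C^0(\Sigma)$ by elliptic estimates, nonvanishing of the degree on $\Sigma$ is an open condition on $\sigma$; hence the admissible $\sigma$ producing a critical point contain an open neighbourhood of $\sigma_\eps$, giving the nonempty open set claimed.

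\textbf{Main obstacle.} The hard part is the nonvanishing of the degree in the first step. In the clean $\sigma=\infty$ limit the natural critical set is degenerate --- the field vanishes on the whole inclusion (or on the flux-nodal curve of $\partial D$) rather than at an isolated transversal zero --- so the degree cannot be read off from a single index and must instead be extracted from the precise geometry of the mixed-boundary-value solution, for instance by arranging the sign pattern of $\partial_\nu u$ on $\partial D$ (equivalently the shape or topology of the inclusion) so that the inflow/outflow decomposition of $\Sigma$ has the right Euler characteristic. Proving that this degree is genuinely nonzero, and that the $C^1$ control survives the singular high-contrast limit well enough to transport the degree from $u$ to $u_\eps$, are the two points where the real work lies.
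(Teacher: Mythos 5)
Your two-step skeleton (infinite-contrast limit, then persistence of a nonzero degree under $C^1$ convergence on a fixed surface, then openness in $\sigma$) is exactly the architecture of the paper's proof: Steps 4--6 there are your second step, carried out with the quantitative condition $\nu\cdot(R\nabla u)>0$ on a small sphere $\partial B(0,r)$, $R={\rm Diag}(-1,1,1)$, and Brouwer's theorem. But the substance of the theorem lives in your first step, and there you have a genuine gap --- one you yourself flag as ``where the real work lies'' --- and the specific setup you propose cannot close it. A single perfectly conducting inclusion $D\Subset X$, compactly contained, is forced to a \emph{single} potential $c$ fixed by the flux balance, and the limiting critical set is then the nodal curve of $\partial_\nu u$ on $\partial D$: it sits on the inclusion itself, is degenerate, and carries no protected index. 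Worse, these zeros are in general an artifact of the limit rather than a perturbation of genuine critical points: for a spherical inclusion of conductivity $k$ in a locally uniform field, the exact solution is uniform field plus dipole outside and a nonzero constant field $\frac{3}{k+2}\,e_1$ inside, and a direct check shows $\nabla u\neq 0$ everywhere, for \emph{every} finite contrast; the equatorial zeros of the $k=\infty$ problem are annihilated, not continued, at finite $k$. So no arrangement of the ``sign pattern of $\partial_\nu u$'' on a compactly contained conductor at one potential yields the nonzero degree; the obstruction is structural. (A conducting shell with a shielded cavity does not help either: the cavity solution is the same constant.)

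The missing idea, which is the paper's key construction, is to let the inclusion \emph{touch the boundary}, and in two pieces: two disjoint handles $X^1_\rho$, $X^2_\rho$ meeting $\partial X$ only in small spots around points $x_{(1)},x_{(2)}$ where $g$ takes two \emph{different} values (after normalization, $1$ and $2$). Wiring the inclusions to $\partial X$ is what lets one prescribe their limiting potentials --- they become the known values $g(x_{(i)})$, by Lemma~\ref{lem:family}, which requires the contact spots to shrink --- whereas the potential of any compactly contained conductor is an uncontrollable output of the flux balance. The handles are attached respectively to the two end discs and to the lateral surface of a small cylinder $Z$, so the limiting field in $Z$ is harmonic with value $1$ on the discs and $2$ on the lateral surface; by the symmetries of $Z$ it has a saddle at the center, $\nabla u^*(O)=0$ and $D^2u^*(O)={\rm Diag}(-2\lambda,\lambda,\lambda)$ with $\lambda>0$ (proved via the maximum principle and the Hopf lemma), i.e.\ a nondegenerate zero of index $-1$, quantified as $\nu\cdot(R\nabla u^*)\ge 8\mu$ on $\partial B(0,r)$. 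This is precisely the robust topological datum your degree argument needs, and it also dictates the analytic work you gesture at: because the handles reach $\partial X$, one needs well-posedness of the Zaremba (mixed Dirichlet--Neumann) problem on creased domains (section~\ref{sec:zaremba}) and a high-contrast asymptotic expansion for inclusions touching the boundary (Proposition~\ref{prop:asymptotic-2}, section~\ref{sec:highc}), since the standard results for compactly contained inclusions do not apply.
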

We consider the case $n=3$ for concreteness of notation, but our results may be easily generalized to the case $n\ge 3$. The above result may be extended to the case of an arbitrary finite number of boundary conditions (see Theorem~\ref{thm:multiple} for the precise statement), to the case of an arbitrary finite number of critical points located in arbitrarily small balls given a priori (Theorem~\ref{thm:main-local}), as well as to the case of Neumann boundary conditions (Theorem~\ref{thm:main-neumann}).

The main idea of the construction is similar to the use of interlocked rings to show that the determinant of $n$ gradients $\nabla u_i$ may change sign in dimension $n\geq3$ \cite{BMN-ARMA-04} (see also \cite{ancona-2002} for the case of critical points), a result that cannot hold in dimension $n=2$ \cite{AN-ARMA-01,bauman2001univalent}. More precisely, let $x_0$ be a point in $X$ and $S$ the surface of a subdomain $Z\subset X$ enclosing $x_0$. We separate $S$ into two disjoint subsets $S_1\cup S_2$ such that the harmonic solution in $Z$ equal to $i$ on $S_i$  has a critical point in $x_0$; see for instance Fig.~\ref{fig:domain} where $S_1$ is the ``circular" part of the boundary of a cylinder while $S_2$ is the ``flat" part of that boundary. Note that at least one of the domains $S_i$ is not connected. Consider the case when $g$ takes at least two values, say, $1$ and $2$ after proper rescaling. For $i=1,2$, let now $\Xii$  be two handles (open domains) joining $S_i$ to points $x_{(i)}$ on $\partial X$ where $g(x_{(i)})=i$. For appropriate choices of $S_i$, the handles $\Xii$ may be shown not to intersect in dimension $n\geq3$, whereas they clearly have to intersect in dimension $n=2$. 
Let us now assume that $\sigma$ is set to $+\infty$ in both handles and equal to $1$ otherwise. This forces the solution $u$ to equal $i$ on $S_i$, to be harmonic in $Z$, and hence to have a critical point in $x_0$. It remains to show that the topology of the vector field $\nabla u$ is not modified in the vicinity of $x_0$ when $\sigma$ is replaced by a sufficiently high-contrast (and possibly smooth) conductivity. 
This proves the existence of critical points for arbitrarily prescribed Dirichlet conditions for some open set of conductivities.

Let us conclude this introductory section by mentioning applications of the aforementioned results to hybrid inverse problems. The latter class of problems typically involves a two step inversion procedure. The first step  provides volumetric information about unknown coefficients of interest. The simplest example of such information is the solution $u$ itself in a problem of the form $\nabla\cdot \sigma(x)\nabla u=0$. The second step of the procedure then aims to reconstruct the unknown coefficients from such information; in the considered example, the conductivity $\sigma(x)$. We refer the reader to \cite{alberti-capdeboscq-2016,A-Sp-08,ABCTF-SIAP-08,ammari-2013,AS-IP-12,B-IO-12,B-CM-14,BR-IP-12,BU-IP-10,BU-CPAM-13,CFGK-SJIS-09,KS-IP-12,MR3289684,MZM-IP-10,NTT-Rev-11,S-SP-2011,widlak-scherzer-2012} and their references for additional information on these inverse problems.

It should be clear from the above example that the reconstruction of $\sigma$ is better behaved when $\nabla u$ does not vanish. In the aforementioned works, results of the following form have been obtained: for each reasonable conductivity $\sigma$, there is an open set of, say, Dirichlet boundary conditions such that $|\nabla u|$ is bounded from below by a positive constant. What our results show is that in dimension $n\geq3$, there is no {\em universal} finite set of Dirichlet boundary conditions for which $|\nabla u|$ is bounded from below by a positive constant uniformly in $\sigma$, which is the condition guaranteeing optimal stability estimates with respect to measurement noise. In other words, {\em optimal}  (in terms of stability) boundary conditions, which may be designed by the practitioner, depend on the (unknown) object we wish to reconstruct; see, e.g., \cite{BC-JDE-13} for such a possible construction. For Helhmoltz-type problems, suitable boundary conditions may be constructed a priori, i.e.\ independently of the parameters, at the price of taking measurements at several frequencies \cite{alberti-2013,alberti-2015b,alberti-2015,alberti-2016,capalb-analytic}.

Note that other, practically less optimal, stability results may be obtained even in the presence of critical points \cite{AL-DC-FR-VE} or nodal points \cite{alessandrini-2014}.  Also, the presence of critical points is not the only qualitative feature of interest in hybrid inverse problems. A result similar to ours in the setting of the sign of the determinant of solution gradients has been recently obtained in \cite{alberti-capdeboscq-2016,CA}. However, this method does not immediately extend to the case of critical points.

This paper is structured as follows. Our main results on the existence of critical points for well-chosen conductivities are presented in section~\ref{sec:main}, first for Dirichlet boundary conditions in $\S$\ref{sec:mainD} and then for Neumann boundary conditions in $\S$\ref{sec:mainN}.
The proofs of these theorems are based on some auxiliary results, which are presented in the rest of the paper. In section~\ref{sec:zaremba} we discuss the Zaremba problem, which concerns the analysis of harmonic functions with mixed boundary values. Finally, in section~\ref{sec:highc} we generalize the high-contrast results of \cite{caloz2010uniform} to the case of inclusions touching the boundary (to address the case of the aforementioned handles). The latter result, obtained for Dirichlet boundary conditions in $\S$\ref{sub:dir}, is modified in $\S$\ref{sec:Nasymp} to treat the case of Neumann  conditions.

\section{Existence of Critical Points}
\label{sec:main}

We now construct a geometry that guarantees the existence of critical points in the infinite contrast setting. We then argue by continuity to obtain the existence of critical points for finite but large contrasts. We first consider the setting with prescribed Dirichlet boundary conditions.

The proofs of this section  make use of the auxiliary results contained in sections~\ref{sec:zaremba} and \ref{sec:highc}.

\subsection{Dirichlet Boundary Conditions}
\label{sec:mainD}
We first state the following technical lemma that allows us to control the harmonic solutions in the handles $\Xii$ in the infinite contrast setting.
\begin{lemma}\label{lem:family}
Let $X\subset\Rm^3$ be a bounded Lipschitz domain. Take $x_0\in\partial X$ and $g\in C(\partial X)\cap H^{\frac12}(\partial X)$. For $\rho\in (0,1)$ consider a family of  subdomains $X_\rho\subset X$  such that
\begin{enumerate}
\item $\partial X_\rho\cap\partial X=B(x_0,\rho)\cap\partial X$;
\item and $X_\rho$ are uniformly Lipschitz (according to \cite[Definition 12.10]{leoni}), with constants independent of $\rho$.
\end{enumerate}
Let $u_\rho\in H^1(X_\rho)$ be the solution of
\[
 \left\{ \begin{array}{l}
         -\Delta u_\rho = 0\quad\text{in $X_\rho$,}\\
         u_\rho=g\quad\text{on $\partial X_\rho\cap\partial X$,}\\
        \partial_\nu u_\rho=0  \quad\text{on $\partial X_\rho\setminus\partial X$.}
        \end{array}
        \right.
\]
Then
\[
\lim_{\rho\to 0}\, \norm{u_\rho - g(x_0)}_{H^{\frac12}(\partial X_\rho)}=0.
\]
\end{lemma}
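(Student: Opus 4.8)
The plan is to subtract the constant value $g(x_0)$ and to estimate the energy and the sup-norm of the remainder separately, combining them through a trace inequality made uniform in $\rho$ by hypothesis (2). Set $\tilde g:=g-g(x_0)$ and $v_\rho:=u_\rho-g(x_0)$; then $v_\rho$ is harmonic in $X_\rho$, satisfies $\partial_\nu v_\rho=0$ on $\Gamma_N:=\partial X_\rho\setminus\partial X$, and has Dirichlet datum $\tilde g$ on $\Gamma_D:=\partial X_\rho\cap\partial X=B(x_0,\rho)\cap\partial X$. Since $g$ is continuous at $x_0$, the quantity $\omega(\rho):=\sup_{B(x_0,\rho)\cap\partial X}\abs{\tilde g}$ tends to $0$, and the key feature is that the Dirichlet datum is small \emph{and} carried by a vanishing patch. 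I would prove $\norm{v_\rho}_{H^{\frac12}(\partial X_\rho)}\to0$ by showing that $\norm{v_\rho}_{L^2(X_\rho)}$ and $\norm{\nabla v_\rho}_{L^2(X_\rho)}$ both vanish, the former fast enough to survive the rescaling hidden in the trace inequality.

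The $L^\infty$ bound comes from a weak maximum principle for the mixed problem (Stampacchia truncation). Writing the weak formulation over $V:=\{\phi\in H^1(X_\rho):\phi=0\text{ on }\Gamma_D\}$ and testing with $\phi=(v_\rho-\omega(\rho))^+\in V$ (admissible since $v_\rho=\tilde g\le\omega(\rho)$ on $\Gamma_D$) gives $\int_{\{v_\rho>\omega(\rho)\}}\abs{\nabla v_\rho}^2=0$; as $X_\rho$ is connected and $\phi$ vanishes on the set $\Gamma_D$ of positive surface measure, $\phi\equiv0$, so $v_\rho\le\omega(\rho)$ a.e., and symmetrically $v_\rho\ge-\omega(\rho)$. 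Hence $\norm{v_\rho}_{L^\infty(X_\rho)}\le\omega(\rho)$, and since (1)--(2) force the handle to live at scale $\rho$ (so $\abs{X_\rho}\lesssim\rho^3$) we get $\norm{v_\rho}_{L^2(X_\rho)}\le\omega(\rho)\abs{X_\rho}^{\frac12}\lesssim\omega(\rho)\rho^{\frac32}$.

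For the gradient I would exploit that $v_\rho$ minimizes the Dirichlet energy among all $w\in H^1(X_\rho)$ with $w=\tilde g$ on $\Gamma_D$, and produce a cheap competitor localized at $x_0$. Let $U\in H^1(X)\cap C(\overline X)$ be the harmonic extension of $\tilde g$ (continuous up to the boundary, with $U(x_0)=\tilde g(x_0)=0$, because every boundary point of a Lipschitz domain is regular), and let $\eta_\rho$ be a cutoff equal to $1$ on $B(x_0,\rho)$, supported in $B(x_0,2\rho)$, with $\abs{\nabla\eta_\rho}\lesssim\rho^{-1}$. Then $w:=U\eta_\rho$ equals $\tilde g$ on $\Gamma_D$, so
\[
\norm{\nabla v_\rho}_{L^2(X_\rho)}^2\le\int_{X_\rho}\abs{\nabla w}^2\le 2\int_{B(x_0,2\rho)}\abs{\nabla U}^2+2\,m(\rho)^2\norm{\nabla\eta_\rho}_{L^\infty}^2\,\abs{B(x_0,2\rho)\cap X},
\]
where $m(\rho):=\sup_{B(x_0,2\rho)\cap\overline X}\abs{U}\to0$ by continuity of $U$ at $x_0$. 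The first term tends to $0$ by absolute continuity of $\int\abs{\nabla U}^2$ over the shrinking balls, and the second is $\lesssim m(\rho)^2\rho^{-2}\rho^3=m(\rho)^2\rho\to0$; hence $\norm{\nabla v_\rho}_{L^2(X_\rho)}\to0$.

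Finally I would invoke hypothesis (2): rescaling by $x=x_0+\rho y$ normalizes the $X_\rho$ to a fixed family of uniformly Lipschitz reference domains, turning the trace inequality into the scale-invariant form
\[
\norm{v_\rho}_{H^{\frac12}(\partial X_\rho)}^2\le C\big(\rho^{-2}\norm{v_\rho}_{L^2(X_\rho)}^2+\norm{\nabla v_\rho}_{L^2(X_\rho)}^2\big),
\]
with $C$ independent of $\rho$. Substituting the two estimates gives $\norm{v_\rho}_{H^{\frac12}(\partial X_\rho)}^2\le C\big(\omega(\rho)^2\rho+m(\rho)^2\rho+\int_{B(x_0,2\rho)}\abs{\nabla U}^2\big)\to0$, which is the claim. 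I expect the main obstacle to be exactly this last step: tracking the $\rho$-scaling so that every constant is uniform, and recognizing that the factor $\rho^{-2}$ multiplying the $L^2$-norm forces one to control $v_\rho$ in $L^\infty$ (via the maximum principle) rather than merely in energy — an $L^2$ bound extracted from the energy alone would not survive the rescaling. A secondary technical point is the boundary continuity of the harmonic extension $U$ at $x_0$, which rests on the regularity of Lipschitz boundary points.
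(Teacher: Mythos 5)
Your geometric reading of hypotheses (1)--(2) is wrong, and it invalidates the final step. Hypothesis (1) only pins down the \emph{contact patch} $D_\rho=\partial X_\rho\cap\partial X=B(x_0,\rho)\cap\partial X$; it does not force the domain $X_\rho$ itself to live at scale $\rho$. In the application (Step 1 of the proof of Theorem~\ref{thm:main}), $X_\rho$ is a handle joining the shrinking boundary ball to the \emph{fixed} cylinder $Z$, so $|X_\rho|$ is bounded below uniformly in $\rho$; moreover, uniform Lipschitz character with constants independent of $\rho$ in the sense of \cite[Definition 12.10]{leoni} fixes the size of the local chart neighborhoods, so it is actually incompatible with domains shrinking to a point. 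This breaks your argument twice: the claim $|X_\rho|\lesssim\rho^3$ is false (the maximum principle only gives $\norm{v_\rho}_{L^2(X_\rho)}\le\omega(\rho)\,|X|^{1/2}$), and the rescaling $x=x_0+\rho y$ does not normalize the $X_\rho$ to a fixed family of reference domains (it blows them up to diameter $\sim\rho^{-1}$), so the $\rho^{-2}$-weighted trace inequality is unjustified. Even if one granted that weighted inequality, the corrected $L^2$ bound would leave the term $\rho^{-2}\norm{v_\rho}_{L^2(X_\rho)}^2\approx\rho^{-2}\omega(\rho)^2$, which need not vanish: take $g$ H\"older continuous of exponent $\tfrac12$ at $x_0$, so $\omega(\rho)\approx\rho^{1/2}$ and this term is $\approx\rho^{-1}\to\infty$. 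So, as written, the conclusion does not follow.

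The good news is that your two intermediate estimates are correct and, with the last step repaired, they suffice. Hypothesis (2) yields \emph{directly}, with no rescaling, the unweighted uniform trace inequality $\norm{v}_{H^{1/2}(\partial X_\rho)}\le C\norm{v}_{H^1(X_\rho)}$ with $C$ independent of $\rho$ (this is exactly how the paper uses it, citing \cite[Exercise 15.25]{leoni}). Combining it with your Stampacchia bound $\norm{v_\rho}_{L^\infty(X_\rho)}\le\omega(\rho)$ (hence $\norm{v_\rho}_{L^2(X_\rho)}\le C\omega(\rho)$) and your cutoff-competitor bound for $\norm{\nabla v_\rho}_{L^2(X_\rho)}$ --- which, note, nowhere uses smallness of $X_\rho$, only that $\nabla(U\eta_\rho)$ is supported in $B(x_0,2\rho)$ --- gives $\norm{v_\rho}_{H^1(X_\rho)}\to0$ and hence the claim. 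Once fixed, your route to the gradient estimate is genuinely different from the paper's: the paper tests the equation against $w_\rho={\rm Ext}_{\partial X}\,{\rm Ext}_{D_\rho}(g-g(x_0))$, relying on the uniformly bounded extension operators of \cite{rychkov1999} and on the vanishing of $\norm{g-g(x_0)}_{H^{1/2}(D_\rho)}$ over the shrinking patches, whereas you trade this for the continuity up to the boundary of the harmonic extension $U$ (valid on Lipschitz domains) plus absolute continuity of its Dirichlet integral. Both work; the paper's version is quantitative in $\norm{g-g(x_0)}_{H^{1/2}(D_\rho)}+\norm{g-g(x_0)}_{L^\infty(D_\rho)}$, while yours avoids the machinery of uniform extension operators on the patches $D_\rho$.
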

\begin{proof}
We denote  several positive constants independent of $\rho$ and $g$ by $C$. Set $D_\rho = ( \partial X_\rho\cap\partial X)^\circ$ and $N_\rho = \partial X_\rho\setminus\partial X$. We first note that, by assumption 2, the trace operator in $X_\rho$ is uniformly bounded, namely
\begin{equation}\label{eq:uniform_trace}
\norm{u}_{H^{\frac12}(\partial X_\rho)} \le C \norm{u}_{H^1(X_\rho)},\qquad u\in H^1(X_\rho),
\end{equation}
see \cite[Exercise 15.25]{leoni}. Similarly, thanks to assumption 1, by \cite{rychkov1999}, we have that the extension operator ${\rm Ext}_{D_\rho}\colon H^{\frac12}(D_\rho)\to H^{\frac12}(\partial X)$ given by Lemma~\ref{lem:traces}, part 3, is uniformly bounded, namely:
\begin{equation}\label{eq:uniform_trace_2}
\norm{{\rm Ext}_{D_\rho}}_{H^{\frac12}(D_\rho)\to H^{\frac12}(\partial X)} \le C.
\end{equation}

 The difference $v_\rho = u_\rho - g(x_0)\in H^1(X_\rho)$ solves
\[
 \left\{ \begin{array}{ll}
         -\Delta v_\rho = 0&\quad\text{in $X_\rho$,}\\
         v_\rho=g-g(x_0)&\quad\text{on $D_\rho$,}\\
        \partial_\nu v_\rho=0  &\quad\text{on $N_\rho$.}
        \end{array}
        \right.
\]
Integrating by parts yields
\[
\norm{\nabla v_\rho}_{L^2(X_\rho)}^2=\int_{X_\rho} |\nabla v_\rho|^2\,dx = \int_{\partial X_\rho} v_\rho \, \partial_\nu v_\rho\,ds=\int_{D_\rho} (g-g(x_0)) \, \partial_\nu v_\rho\,ds.
\]
Set $w_\rho={\rm Ext}_{\partial X} {\rm Ext}_{D_\rho}(g-g(x_0))\in H^1(X)$. Integrations by parts give
\[
\norm{\nabla v_\rho}_{L^2(X_\rho)}^2=\int_{\partial X_\rho} w_\rho \, \partial_\nu v_\rho\,ds = \int_{X_\rho} \nabla v_\rho\cdot \nabla w_\rho\,dx\le \norm{\nabla w_\rho}_{L^2(X_\rho)}\norm{\nabla v_\rho}_{L^2(X_\rho)},
\]
which yields
\[
\begin{split}
\norm{\nabla v_\rho}_{L^2(X_\rho)}  &\le \norm{\nabla w_\rho}_{L^2(X)} \\
& \le
\norm{{\rm Ext}_{\partial X}}\norm{ {\rm Ext}_{D_\rho}}\norm{g-g(x_0)}_{H^{\frac12}(D_\rho)}\\&\le C\norm{g-g(x_0)}_{H^{\frac12}(D_\rho)},
\end{split}
\]
where the last inequality follows from \eqref{eq:uniform_trace_2}.
Moreover, the Hopf lemma yields
\[
\norm{v_\rho}_{L^2(X_\rho)}\le C\norm{v_\rho}_{L^\infty(X_\rho)}\le C\norm{g-g(x_0)}_{L^\infty(D_\rho)}.
\]

Combining these two inequalities we obtain
\[
\norm{v_\rho}_{H^1(X_\rho)}\le C\bigl(\norm{g-g(x_0)}_{H^{\frac12}(D_\rho)}+\norm{g-g(x_0)}_{L^\infty(D_\rho)}\bigr).
\]
As a consequence, by \eqref{eq:uniform_trace} we have
\[
\norm{v_\rho}_{H^{1/2}(\partial X_\rho)}\le C\bigl(\norm{g-g(x_0)}_{H^{\frac12}(D_\rho)}+\norm{g-g(x_0)}_{L^\infty(D_\rho)}\bigr).
\]
Finally, by continuity of $g$ and assumption 1, $\norm{ g-g(x_0)}_{L^{\infty}(\partial X_\rho\cap\partial X)}\to 0$ as $\rho\to 0$. Moreover, by the fact that $g\in H^{\frac12}(\partial X)$ and assumption 1, $ \norm{ g-g(x_0)}_{H^{\frac12}(\partial X_\rho\cap\partial X)}\to 0$  as $\rho\to 0$. This concludes the proof.
\end{proof}

We are now ready to prove our main result.
\begin{mainthm}\label{thm:main2}
Let $X\subset\Rm^3$ be a bounded Lipschitz domain. Take $g\in C(\partial X)\cap H^{\frac12}(\partial X)$. Then there exists a nonempty open set of conductivities $\sigma\in C^\infty(\overline{X})$, $\sigma\ge 1/2$, such that the solution $u\in H^1(X)$ to
\begin{equation*}
 -\nabla\cdot\sigma\nabla u =0 \quad \mbox{ in } X,\qquad u=g \quad\mbox{ on } \partial X
\end{equation*}
has a critical point in $X$, namely $\nabla u(x)=0$ for some $x\in X$ (depending on $\sigma$).
\end{mainthm}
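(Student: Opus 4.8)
The plan is to realize the heuristic of the introduction rigorously, in three stages: a geometric construction, the analysis of an idealized infinite-contrast solution possessing a critical point of nonzero topological index, and a perturbation argument passing to smooth finite-contrast conductivities. First I would dispose of the trivial case: if $g$ is constant then $u\equiv g$ and every point of $X$ is a critical point, so I may assume $g$ nonconstant and, after an affine rescaling (which does not move critical points), that $g$ attains the values $1$ and $2$ at boundary points $x_{(1)},x_{(2)}\in\partial X$. I would then fix an interior point $x_0\in X$ and a thin cylinder $Z$ centered at $x_0$, and split its boundary as $S=S_1\cup S_2$ with $S_1$ the lateral surface and $S_2$ the two caps. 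The delicate geometric point is to attach to each $S_i$ a handle $\Xii\subset X$ joining it to $x_{(i)}$, so that $\Xu$ and $\Xd$ are disjoint (the handle attached to the disconnected $S_2$ is taken connected, meeting both caps); this is possible precisely because in $\Rm^3$ two tubes may be interlinked without meeting, the planar obstruction being absent. I would take the cylinder thin and the handles as embedded tubes so that $\sigma=1$ in a neighborhood of $x_0$.

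Second, I would analyze the infinite-contrast solution $u_0$, namely the limit of $u_\sigma$ as $\sigma\to+\infty$ on the handles. Setting $\sigma=+\infty$ on $\Xii$ forces $u_0$ to be constant on each handle; since $\Xii$ touches $\partial X$ at $x_{(i)}$ where $g=i$, that constant is $i$, so $u_0=i$ on $S_i$. Thus $u_0$ restricted to $Z$ is the harmonic function with Dirichlet data $i$ on $S_i$. By rotational symmetry $u_0$ has no angular gradient and its radial derivative vanishes on the axis, while reflection symmetry across the mid-plane gives $\partial_z u_0=0$ at the center; hence $\nabla u_0(x_0)=0$. Harmonicity yields $2\,\partial_{rr}u_0+\partial_{zz}u_0=0$ on the axis, and the maximum principle forces $u_0(x_0)\in(1,2)$ with $u_0$ increasing toward the caps, so (adjusting the aspect ratio of the cylinder if necessary) $\partial_{zz}u_0(x_0)\neq0$. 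The Hessian eigenvalues are then $a,-a/2,-a/2$ with $a=\partial_{zz}u_0(x_0)\neq0$, so $x_0$ is an isolated critical point whose index, equal to the Brouwer degree of $\nabla u_0/|\nabla u_0|$ on a small sphere $\partial B(x_0,r)$, is $\pm1$.

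Third, I would transfer this to smooth conductivities. Taking $\sigma_k$ smooth, equal to a large constant $k$ on a mollified thickening of the handles and to $1$ elsewhere (in particular near $x_0$), the high-contrast convergence results of section~\ref{sec:highc} — extended there to inclusions touching the boundary, with Lemma~\ref{lem:family} supplying the uniform control of the harmonic correctors in the handles — give $u_{\sigma_k}\to u_0$ in $H^1$ as $k\to\infty$. Since $\sigma_k\equiv1$ near $x_0$, each $u_{\sigma_k}$ is harmonic on $B(x_0,r)$, so interior elliptic estimates upgrade the $H^1$ convergence to $C^1$ (indeed $C^\infty$) convergence on $\partial B(x_0,r)$. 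Hence $\nabla u_{\sigma_k}$ is $C^0$-close to the nonvanishing field $\nabla u_0$ on that sphere for $k$ large, and homotopy invariance gives $\deg(\nabla u_{\sigma_k},B(x_0,r))=\pm1\neq0$, forcing a zero of $\nabla u_{\sigma_k}$ inside. The same degree is stable under $C^0$-small perturbations of $\nabla u$ on the sphere, which are produced by sufficiently small perturbations of the coefficient; this furnishes a nonempty \emph{open} set of admissible $\sigma\in C^\infty(\overline X)$ with $\sigma\ge1/2$, completing the proof.

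The step I expect to be the main obstacle is the high-contrast convergence $u_{\sigma_k}\to u_0$ for handles that \emph{touch the boundary}: the classical estimates of \cite{caloz2010uniform} assume inclusions compactly contained in $X$, and near the contact points $x_{(i)}$ the geometry degenerates, so one must control the solution uniformly in the thin handles as the contrast diverges. This is exactly the role of Lemma~\ref{lem:family} and of the Zaremba (mixed boundary value) analysis of section~\ref{sec:zaremba}, and checking that these yield convergence strong enough — after interior regularity near $x_0$ — to preserve the degree is the crux. A secondary subtlety is the purely topological construction of the two disjoint handles realizing the prescribed boundary values, which has no two-dimensional analogue.
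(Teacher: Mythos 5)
Your overall strategy coincides with the paper's: the cylinder-plus-two-disjoint-handles construction exploiting three dimensions, the double limit (infinite contrast, shrinking contact patches) producing a symmetric harmonic saddle in $Z$, interior elliptic regularity to upgrade weak convergence to $C^1$ convergence near the center, and a topological conclusion (your degree argument on $\partial B(x_0,r)$ is equivalent to the paper's condition $\nu\cdot(R\nabla u)>0$ on the sphere followed by Brouwer's theorem), together with openness of the resulting set of conductivities. However, there is one genuine gap, at the nondegeneracy of the critical point of the limit solution. You assert that since $u_0(x_0)\in(1,2)$ and $u_0$ ``increases toward the caps,'' one gets $\partial_{zz}u_0(x_0)\neq 0$ ``adjusting the aspect ratio of the cylinder if necessary.'' This does not follow: the boundary values give no sign information on a second derivative at an interior point, and you offer no proof that \emph{any} aspect ratio works. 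The point is not cosmetic: if $\partial_{zz}u_0(x_0)=0$, then by harmonicity and the rotational/reflection symmetries the whole Hessian vanishes at $x_0$, the critical point may be degenerate (in three dimensions critical points of harmonic functions need not even be isolated), and your index-$\pm1$ claim and the ensuing homotopy-invariance argument collapse. The paper fills exactly this hole with a nontrivial argument: extend $u^*$ by odd reflection across a cap plane, observe that $v=\partial_{x_1}u^*_s$ is harmonic with sign-definite boundary values on the doubled cylinder, conclude $v\le 0$ by the maximum principle, and apply the Hopf lemma at the center plane to get the strict inequality $\partial^2_{x_1}u^*(O)<0$ (with a quantitative Bessel-series version in the appendix). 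You need this argument, or a substitute for it, for the proof to go through.

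A secondary imprecision: your claim that infinite contrast forces the solution to be \emph{exactly} the constant $i$ on the handle $X^i$ is false for a fixed handle touching $\partial X$. Since the handle meets the boundary on a patch $D^i$ where the Dirichlet datum $g$ is prescribed, the $\eta\to 0$ limit in the handle solves a Zaremba problem ($u=g$ on $D^i$, $\partial_\nu u=0$ on the rest) and is non-constant unless $g$ is constant on $D^i$; the constant value $i$ emerges only in the second limit $\rho\to 0$ of shrinking contact patches, which is precisely what Lemma~\ref{lem:family} controls. You cite the right tool, but the two parameters must be kept distinct and the limits taken in the correct order with quantitative margins, as the paper does in its Steps 2--5 (the successive bounds $8\mu$, $4\mu$, $2\mu$, $\mu$ on $\nu\cdot(R\nabla u)$); conflating them, as your narrative does, is not a proof.
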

\begin{remark}
Note that such {\em pathological} conductivities $\sigma$ will necessarily have sufficiently high contrast. Indeed, take for example $g(x)=x_1$: if $\sigma$ is sufficiently close to $\sigma_0\equiv 1$ in the $C^{0,\alpha}$ norm, then standard Schauder estimates yield that $\nabla u \approx (1,0,0)$ uniformly, and so critical points do not exist.
\end{remark}

\begin{proof}
If $g$ is constant, then the result is obvious. Thus, assume that there exist  $x_{(1)},x_{(2)}\in\partial X$ such that $g(x_{(1)})\neq g(x_{(2)})$. Without loss of generality, we assume that $g(x_{(i)})=i$ for $i=1,2$. Let us precisely discuss how to construct the subdomains where the conductivity will have very large values. These subdomains  will depend on a small parameter $\rho\in (0,\tilde \rho)$ to be fixed later.
\begin{figure}
\begin{center}
\includegraphics[width=.89\textwidth]{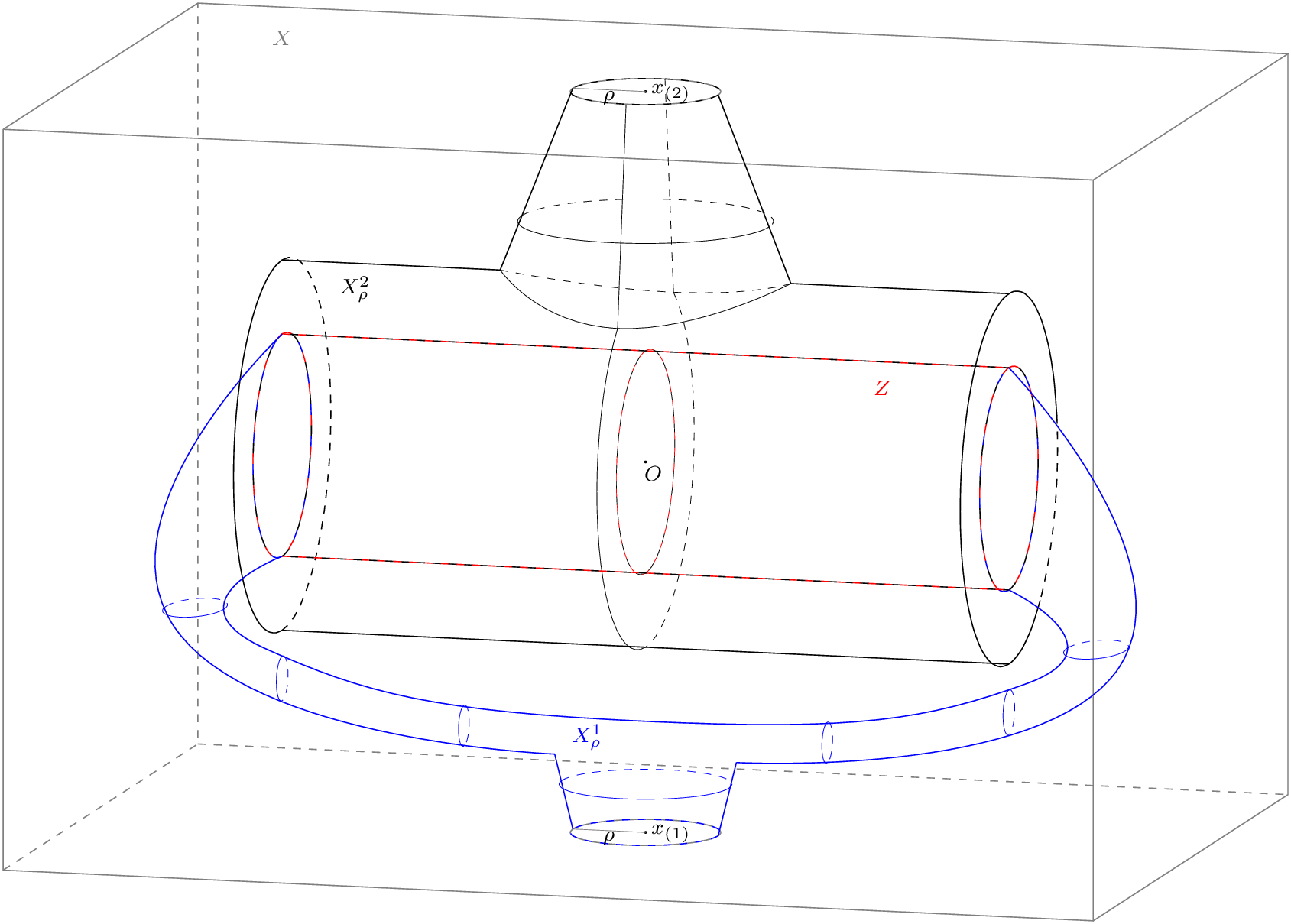}
\caption{The subdomains $Z$ and $X^i_{\rho}$. \label{fig:domain}}
\end{center}
\end{figure}

\medskip

\emph{Step 1: Construction of the subdomains.}
See Figure~\ref{fig:domain}.
Let $Z$ be the cylinder given by $Z=\{x\in\mathbb R^3:x_2^2+x_3^2<1, |x_1|<2\}$.
Without loss of generality, we assume that $X$ is connected and that $\overline{Z}\subset X$. The two lateral discs of the cylinder $Z$ are connected to $x_{(1)}$ with a Lipschitz subdomain $X^1_{\rho}$ satisfying the assumptions of Lemma~\ref{lem:family}. Similarly, the lateral surface of $Z$ is connected to $x_{(2)}$ with a Lipschitz subdomain $X^2_{\rho}$ satisfying the assumptions of Lemma~\ref{lem:family}. In particular, $\partial X^i_{\rho}\cap\partial X=B(x_{(i)},\rho)\cap\partial X$ for $i=1,2$. Moreover, we choose $X^i_{\rho}$ in such a way that $X^i_{\rho}$, with respect to the decomposition of the boundary given by $(\partial X^i_{\rho}\cap\partial X)^\circ$ and $\partial X^i_{\rho}\setminus\partial X$, is creased, according to Definition~\ref{def:creased}. In essence, this means that  $(\partial X^i_{\rho}\cap\partial X)^\circ$ and $\partial X^i_{\rho}\setminus\partial X$ are separated by a Lipschitz interface and that the angle between them is smaller than $\pi$.

\medskip

\begin{figure}[t]

\begin{centering}
\subfloat[The field $\nabla u^*$]{\label{fig:multi-frequency_1Da}\includegraphics[width=.42\textwidth]{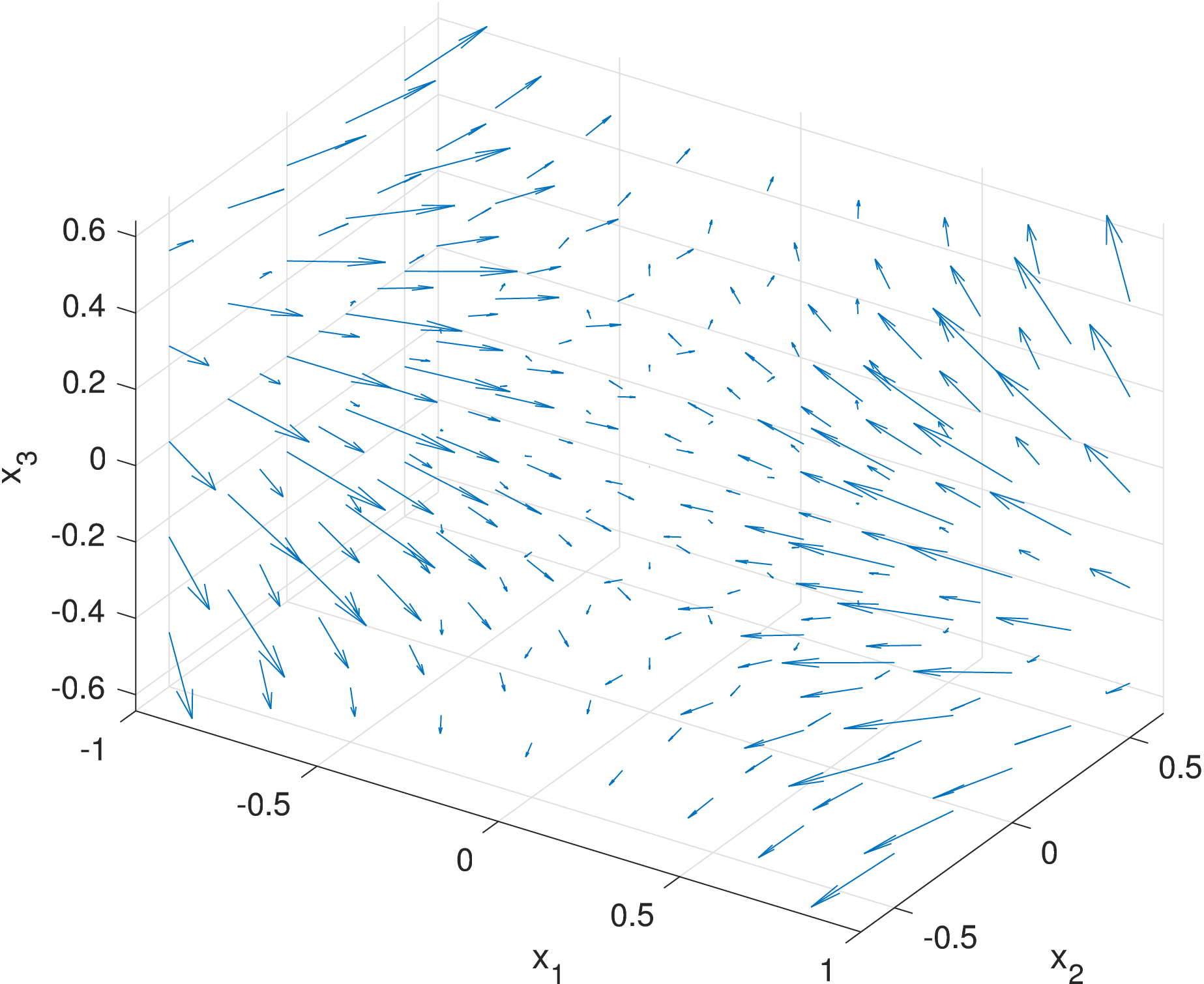}
}
\qquad\subfloat[The field $\nabla u^*$ on the section $x_2=0$]{\label{fig:multi-frequency_1Db}\includegraphics[width=.39\textwidth]{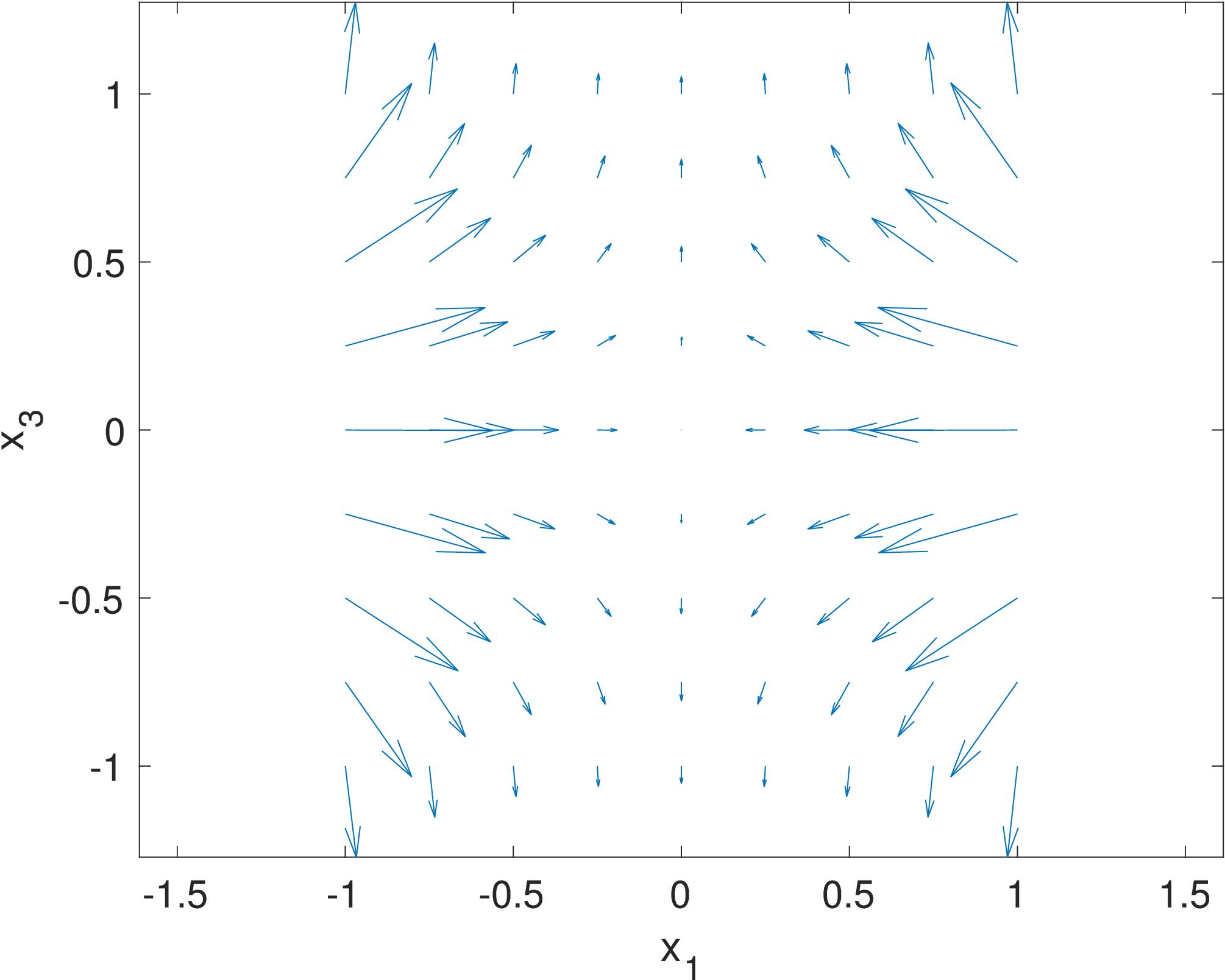}}

\subfloat[The field $R\nabla u^*$]{\label{fig:multi-frequency_1Da}\includegraphics[width=.42\textwidth]{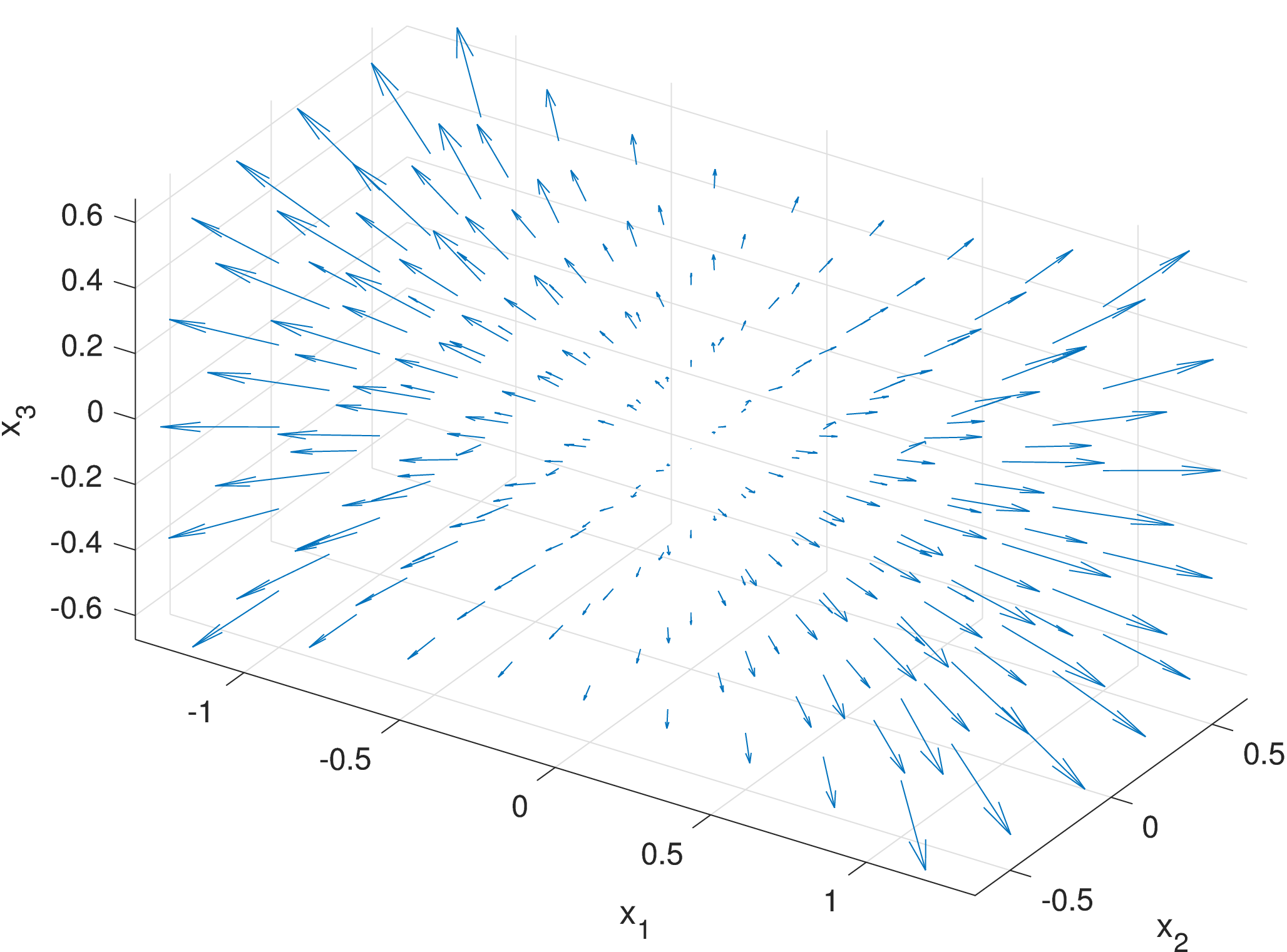}
}
\qquad\subfloat[The field $R\nabla u^*$ on the section $x_2=0$]{\label{fig:multi-frequency_1Db}\includegraphics[width=.39\textwidth]{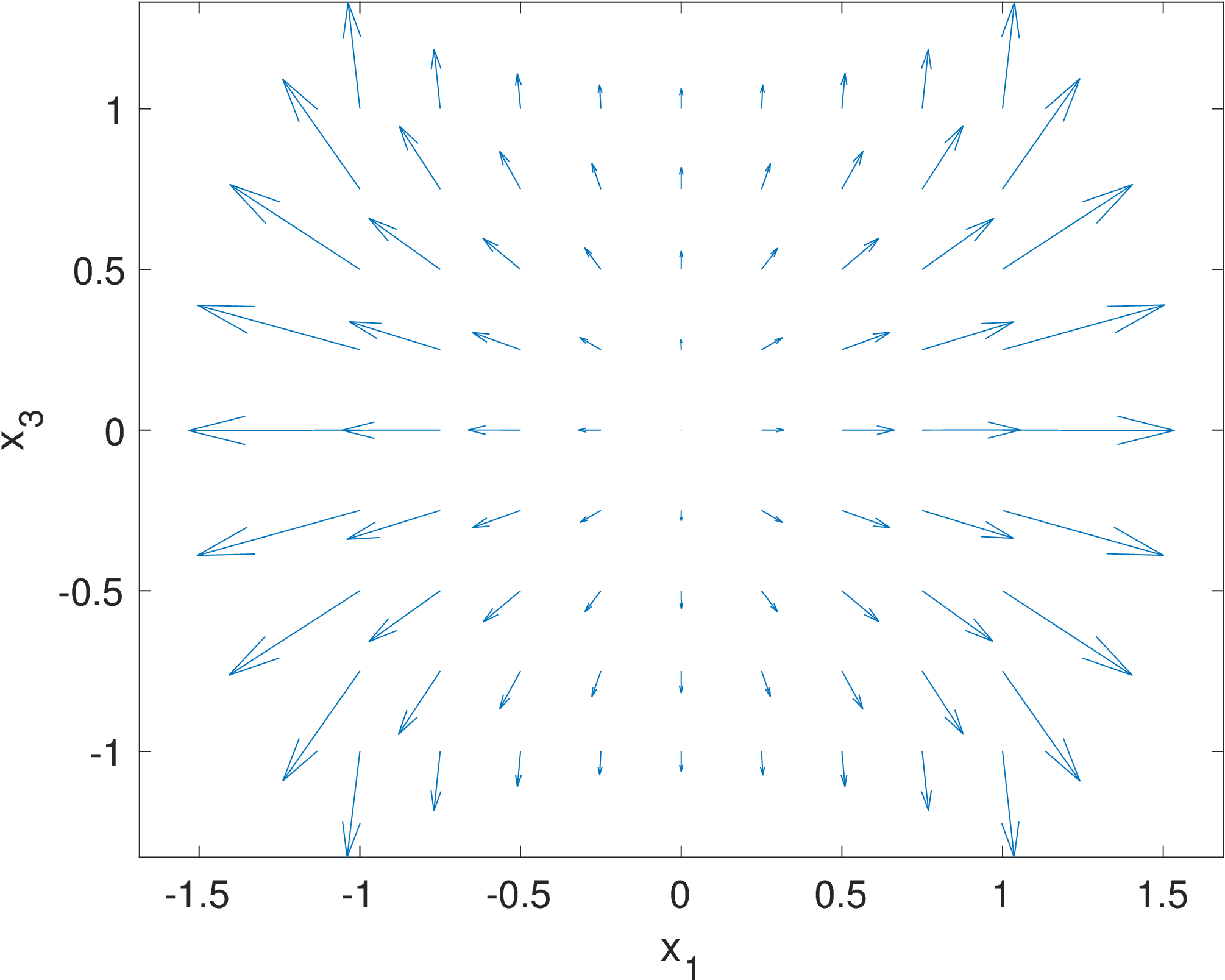}
}

\caption{\label{fig:gradients}The fields $\nabla u^*$ and $R\nabla u^*$ near the origin.}
\end{centering}
\end{figure}

\emph{Step 2: The limiting case in $Z$ as $\eta\to 0$ and $\rho\to 0$.} Let $u^*\in H^{\frac34}(Z)$ be the unique weak solution (existence and uniqueness follow from Lemma~\ref{lem:jumps} and Proposition~\ref{prop:zaremba}) to
\begin{equation}\label{eq:ustar-pde}
 \left\{ \begin{array}{ll}
         -\Delta u^* = 0&\quad\text{in $Z$,}\\
         u^*=1&\quad\text{on $\partial Z\cap\partial X^1_{\rho}$,}\\
         u^*=2&\quad\text{on $\partial Z\cap\partial X^2_{\rho}$.}
        \end{array}
        \right.
\end{equation}
By the symmetries of the domain $Z$ and of the boundary values of $u^*$, we have that $u^*$ is even with respect to $x_1$ and radially symmetric with respect to $(x_2,x_3)$. Therefore, setting $O=(0,0,0)$, we have
\[
\nabla u^*(O) = 0,\qquad  \partial_{x_i x_j} u^*(O) = 0,\;i\neq j.
\]
As a consequence, since $u^*$ is harmonic, the Hessian of $u^*$ at $O$ is of the form ${\rm Diag(-2\lambda,\lambda,\lambda)}$ for some $\lambda\in\mathbb{R}$. We now show that $\lambda>0$ (see Figure~\ref{fig:gradients}).
Consider the function $u^*_s$ on $Z_+ = \{x\in\mathbb{R}^3:0<x_1<4,x_2^2+x_3^2<1\}$ defined by
\[
u^*_s(x_1,x_2,x_3)=
\begin{cases}
u^*(x_1,x_2,x_3) &\text{if $x_1\le 2$,}\\
2-u^*(4-x_1,x_2,x_3) &\text{if $x_1> 2$.}
\end{cases}
\]
By construction, since $u^*_s$ and $\partial_{x_1} u^*_s$ are continuous across $\{x_1=2\}$, we have that $u^*_s$ is harmonic in $Z_+$. Thus, the function $v=\partial_{x_1} u^*_s$ is harmonic in $Z_+$ as well. Since $u^*$ is even with respect to $x_1$, we have $v=0$ on $\partial Z_+ \cap \{x_2^2+x_3^2<1\}$. Moreover, since $u^*_s=2$ on $\partial Z_+ \cap \{0<x_1<2\}$ and $u^*_s=0$ on $\partial Z_+ \cap \{2<x_1<4\}$, we have $v = -2\delta_{\{x_1=2\}}$ on $\partial Z_+ \cap \{0<x_1<4\}$. We have proven that
\begin{equation*}
 \left\{ \begin{array}{ll}
         -\Delta v = 0&\quad\text{in $Z_+$,}\\
         v=0&\quad\text{on $\partial Z_+\cap\{x_2^2+x_3^2<1\}$,}\\
         v=-2\delta_{\{x_1=2\}}&\quad\text{on $\partial Z_+\cap\{x_2^2+x_3^2=1\}$.}
        \end{array}
        \right.
\end{equation*}
Thus, by the maximum principle we obtain that $v\le 0$ in $Z_+$. Finally,  the Hopf lemma applied to $v_{|\{0<x_1<1\}}$ yields that
 $\partial^2_{x_1} u^*(O)=\partial_{x_1} v(O)<0$, namely $\lambda>0$. The above qualitative argument, which is sufficient for our proof, may be made quantitative by writing an explicit expression for $u^*$ as a series expansion; the reader is referred to appendix~\ref{sec:appendix} for the details.

We have shown that $u^*$ has a saddle point in $O$; more precisely, we have $\nabla u^*(O) = 0$ and $D^2u^*(O) = {\rm Diag(-2\lambda,\lambda,\lambda)}$ with $\lambda>0$.
This implies
\begin{equation}\label{eq:ustar}
\nu\cdot(R\nabla u^*)\ge 8\mu\quad\text{on $\partial B(0,r)$}
\end{equation}
for some $\mu>0$ and $r\in(0,1)$, where $R$ is the diagonal matrix given by $R={\rm Diag}(-1,1,1)$.

\medskip

\emph{Step 3: The limiting case  as $\eta\to 0$ for $\rho$ small enough.} Let $u^{i}_{\rho}\in H^{1}(X^i_{\rho})$ be the unique weak solution (existence and uniqueness follow from  Proposition~\ref{prop:zaremba}) to
\[
 \left\{ \begin{array}{ll}
         -\Delta u^{i}_{\rho} = 0&\quad\text{in $X^i_{\rho}$,}\\
         u^{i}_{\rho}=g&\quad\text{on $\partial X^i_{\rho}\cap\partial X$,}\\
         \partial_\nu u^{i}_{\rho}=0&\quad\text{on $\partial X^i_{\rho}\setminus\partial X$.}
        \end{array}
        \right.
\]
Since $g(x_{(i)})=i$, by Lemma~\ref{lem:family}, we have that
\begin{equation}\label{eq:step2}
\lim_{\rho\to 0}\, \norm{u^{i}_{\rho}-i}_{H^{\frac12}(\partial X^i_{\rho})}=0,\qquad i=1,2.
\end{equation}

Let $u^Z_\rho\in H^{\frac34}(Z)$ be defined by
\[
 \left\{ \begin{array}{ll}
         -\Delta u^Z_\rho = 0&\quad\text{in $Z$,}\\
         u^Z_\rho=u^{1}_{\rho}&\quad\text{on $\partial Z\cap\partial X^1_{\rho}$,}\\
         u^Z_\rho=u^{2}_{\rho}&\quad\text{on $\partial Z\cap\partial X^2_{\rho}$.}
        \end{array}
        \right.
\]
By Lemma~\ref{lem:jumps} and Proposition~\ref{prop:zaremba} we have that
\[
\norm{u^Z_\rho-u^*}_{H^{\frac34}(Z)}\le C (\norm{u^{1}_{\rho}-1}_{H^{\frac12}(\partial Z\cap \partial X^1_{\rho})}+\norm{u^{2}_{\rho}-2}_{H^{\frac12}(\partial Z\cap \partial X^2_{\rho})})
\]
for an absolute constant $C>0$. Therefore, elliptic regularity theory yields
\[
\norm{u^Z_\rho-u^*}_{C^{1}\bigl(\overline{B(0,r)}\bigr)}\le C' (\norm{u^{1}_{\rho}-1}_{H^{\frac12}(\partial Z\cap \partial X^1_{\rho})}+\norm{u^{2}_{\rho}-2}_{H^{\frac12}(\partial Z\cap \partial X^2_{\rho})})
\]
for some $C'>0$ independent of $\rho$, and so by \eqref{eq:step2} we obtain
\begin{equation*}
\lim_{\rho\to 0}\, \norm{u^Z_\rho-u^*}_{C^{1}\left(\overline{B(0,r)}\right)}=0.
\end{equation*}
As a consequence, in view of \eqref{eq:ustar} we can choose $\rho_0>0$ such that
\begin{equation}\label{eq:step2b}
\nu\cdot(R\nabla u^Z_{\rho_0})\ge 4\mu\quad\text{on $\partial B(0,r)$.}
\end{equation}

\medskip

\emph{Step 4: Case with $\rho$ and $\eta$ small enough.} For $\eta\in (0,1)$, define $\sigma_\eta\in L^\infty (X)$ by
\[
 \sigma_\eta=\begin{cases}
              \eta^{-1} & \text{in $X^1_{\rho_0}\cup X^2_{\rho_0}$,} \\
              1 & \text{otherwise.}
             \end{cases}
\]
Let $u_\eta\in H^1(X)$ be the unique solution to
\begin{equation*}
 -\nabla\cdot\sigma_\eta\nabla u_\eta =0 \quad \mbox{ in } X,\qquad u_\eta=g \quad\mbox{ on } \partial X.
\end{equation*}
By Proposition~\ref{prop:asymptotic-2} we have $\norm{u_\eta - u^Z_{\rho_0}}_{H^{1-\delta}(Z)}\to 0$ as $\eta\to 0$ for some $\delta\in (0,\frac{1}{2})$.  Arguing as in Step 3, by \eqref{eq:step2b} we obtain
\begin{equation}\label{eq:step3}
\nu\cdot(R\nabla u_{\eta_0})\ge 2\mu\quad\text{on $\partial B(0,r)$}
\end{equation}
for some $\eta_0>0$.

\medskip

\emph{Step 5: The case of a smooth conductivity.} 
Let $\sigma_{\eta_0}^\epsilon\in C^\infty(X)$ be the standard mollified version of $\sigma_{\eta_0}$ for $\epsilon\in (0,1)$, namely $\sigma^\epsilon_{\eta_0}=\sigma_{\eta_0}*\varphi_\epsilon$, where
\[
\vp_\eps(x)=\eps^{-3}\vp(x/\eps),\qquad 
\vp(x)=
\begin{cases}
c \,e^{1/(|x|^2-1)}  & \text{if $|x|<1$,}\\
0 & \text{if $|x|\ge1$,}
\end{cases}
\]
and $c$ is chosen in such a way that $\int_{\mathbb R^3}\vp(x)\,dx=1$.  It is well known that $\sigma_{\eta_0}^\epsilon\to\sigma_{\eta_0}$ in $L^2(X)$. Let $u^\epsilon\in H^1(X)$ be the unique solution to
\begin{equation*}
 -\nabla\cdot\sigma_{\eta_0}^\epsilon\nabla u^\epsilon =0 \quad \mbox{ in } X,\qquad u^\epsilon=g \quad\mbox{ on } \partial X.
\end{equation*}

Observe now that $v^\epsilon = u^\epsilon - u_{\eta_0}\in H^1(X)$ is the unique weak solution of
\begin{equation}\label{eq:veps}
 -\nabla\cdot\sigma_{\eta_0}\nabla v^\epsilon =\nabla\cdot((\sigma_{\eta_0}^\epsilon-\sigma_{\eta_0})\nabla u^\epsilon) \quad \mbox{ in } X,\qquad v^\epsilon=0 \quad\mbox{ on } \partial X.
\end{equation}
It is easy to see that $v^\eps\to 0$ in $L^2(X)$\footnote{Since $\sigma^\eps_{\eta_0}$ is uniformly bounded by below and above by positive constants independent of $\eps$, we have that $u^\eps$ is uniformly bounded in $H^1(X)$. In particular, $v^\eps$ is uniformly bounded in $H^1_0(X)$. Therefore, there exists $v\in H^1_0(X)$ such that $v^\eps\rightharpoonup v$ in $H^1_0(X)$, up to a subsequence. Thus, by the Rellich--Kondrachov theorem we have that $v^\eps\to v $ in $L^2(X)$. It remains to show that $v=0$. Testing \eqref{eq:veps} against any $w\in C^\infty(X)$ with compact support contained in $X$ we have
\[
\int_X \sigma_{\eta_0}\nabla v^\eps\cdot\nabla w\,dx = \int_X (\sigma_{\eta_0}^\epsilon-\sigma_{\eta_0})\nabla u^\epsilon\cdot\nabla w\,dx.
\]
Since $\nabla v^\eps\rightharpoonup \nabla v$ in $L^2(X)$, the left hand side of this equality converges to $\int_X \sigma_{\eta_0}\nabla v\cdot\nabla w\,dx$ as $\eps\to 0$. On the other hand, we have
\[
|\int (\sigma_{\eta_0}^\epsilon-\sigma_{\eta_0})\nabla u^\epsilon\cdot\nabla w\,dx|\le 
\norm{\sigma_{\eta_0}^\epsilon-\sigma_{\eta_0}}_{L^2(X)}\norm{\nabla u^\epsilon}_{L^2(X)}\norm{\nabla w}_{L^\infty(X)}\underset{\eps\to 0}{\longrightarrow} 0.
\]
As a consequence, we have that $\nabla\cdot\sigma_{\eta_0}\nabla v=0$ in $X$, so that $v=0$.}.
Since $\sigma_{\eta_0}$ is constant in $Z$, for $\eps$ small enough we have that $\sigma_{\eta_0}^\epsilon-\sigma_{\eta_0}\equiv 0$ in $B(0,r)$. Thus, applying standard Schauder estimates (see \cite[Corollary 8.36]{GT-2001}) to \eqref{eq:veps} in $B(0,r)$ we obtain 
$
\norm{ v^\eps}_{C^1(\overline{B(0,r)})} \le C \norm{v^\eps}_{L^2(X)}
$, which implies
\[
\lim_{\epsilon\to 0} \norm{u^\epsilon - u_{\eta_0}}_{C^{1}\bigl(\overline{B(0,r)}\bigr)}=0.
\]
As a consequence, in view of \eqref{eq:step3} we can choose $\epsilon_0>0$ such that
\begin{equation*}
\nu\cdot(R\nabla u^{\epsilon_0})\ge \mu\quad\text{on $\partial B(0,r)$.}
\end{equation*}
Consider now the set of pathological conductivities given by
\[
P=\{\sigma\in C^\infty(\overline{X}):\text{$\sigma > 1/2$ in $X$, $\nu\cdot(R\nabla u^{\sigma})>0$ on $\partial B(0,r)$}\},
\]
where $u^\sigma\in H^1(X)$ is the unique solution to
\begin{equation*}
 -\nabla\cdot\sigma\nabla u^\sigma =0 \quad \mbox{ in } X,\qquad u^\sigma=g \quad\mbox{ on } \partial X.
\end{equation*}
We proved that $\sigma^{\epsilon_0}_{\eta_0}\in P$, so that $P\neq\emptyset $, and by construction $P$ is open.

\medskip

\emph{Step 6: The critical point.}
Finally, by the Brouwer fixed point theorem (see, e.g., \cite[Chapter 9.1]{evans}), for every $\sigma\in P$ the field $R\nabla u^{\sigma}$ must vanish somewhere in $B(0,r)$. Thus, $u^{\sigma}$ has a critical point in $B(0,r)$. This concludes the proof of the theorem.
\end{proof}

We generalize the preceding result to the case of a finite number of boundary conditions. For any finite number of boundary conditions, we can find a conductivity such that all the corresponding solutions have at least one critical point in $X$. In other words, considering multiple boundary conditions does not guarantee the absence of critical points for any of the corresponding solutions. More precisely, we have the following result.

\begin{theorem}\label{thm:multiple}
Let $X\subset\Rm^3$ be a bounded Lipschitz domain. Take $g_1,\dots,g_L\in C(\partial X)\cap H^{1/2}(\partial X)$. Then there exists a nonempty open set of conductivities $\sigma\in C^\infty(\overline{X})$, $\sigma\ge 1/2$  such that for every $l=1,\dots,L$, the solution $u^{l}\in H^1(X)$ to
\begin{equation*}
 -\nabla\cdot\sigma\nabla u^{l}=0 \quad \mbox{ in } X,\qquad u^{l}=g_l \quad\mbox{ on } \partial X
\end{equation*}
has at least one critical point in $X$, namely $\nabla u^{l}(x^{l})=0$ for some $x^{l}\in X$ (depending on $\sigma$).
\end{theorem}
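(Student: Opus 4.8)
The plan is to run the six-step argument in the proof of Theorem~\ref{thm:main} simultaneously for all $L$ boundary conditions, using a single conductivity placed at very high contrast on finitely many \emph{disjoint} handles. First I would dispose of the trivial indices: if $g_l$ is constant then $u^l$ is constant and $\nabla u^l\equiv 0$, so every point of $X$ is a critical point and no construction is needed for that $l$. Let $L'\le L$ be the number of non-constant $g_l$. For each such $l$ I would pick two boundary points $x^l_{(1)},x^l_{(2)}\in\partial X$ with $g_l(x^l_{(1)})<g_l(x^l_{(2)})$, attaching the point of \emph{smaller} value to the flat lateral discs and the point of larger value to the curved lateral surface of a cylinder $Z_l$ (a scaled, translated copy of $Z$). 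By continuity of the $g_l$, these $2L'$ attachment points, together with the cylinders $Z_l$ and handles $X^{l,1}_\rho,X^{l,2}_\rho$ (each satisfying the hypotheses of Lemma~\ref{lem:family} and creased in the sense of Definition~\ref{def:creased}), can be chosen pairwise disjoint. This is exactly where dimension three is used, as in the single-condition case: finitely many handles can always be routed in $\Rm^3$ without intersecting.

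Next I would reproduce Steps 2--3 gadget by gadget. For each $l$ the limiting harmonic function $u^*_l$ on $Z_l$, equal to $g_l(x^l_{(1)})$ on the discs and $g_l(x^l_{(2)})$ on the lateral surface, has by linearity a Hessian at its centre $O_l$ equal to $(g_l(x^l_{(2)})-g_l(x^l_{(1)}))\,{\rm Diag}(-2\lambda,\lambda,\lambda)$ with $\lambda>0$; the normalization $g_l(x^l_{(1)})<g_l(x^l_{(2)})$ guarantees a saddle of the \emph{same} signature for every $l$, so the single matrix $R={\rm Diag}(-1,1,1)$ yields $\nu\cdot(R\nabla u^*_l)\ge 8\mu_l$ on $\partial B(O_l,r_l)$ for suitable $\mu_l>0$, $r_l\in(0,1)$. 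Using Lemma~\ref{lem:family}, Lemma~\ref{lem:jumps} and Proposition~\ref{prop:zaremba} as before, I would then fix one $\rho_0$, small enough to work for all finitely many $l$ at once, so that $\nu\cdot(R\nabla u^{Z_l}_{\rho_0})\ge 4\mu_l$ on each $\partial B(O_l,r_l)$.

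The one genuinely new point is Step 4, which I expect to be the main obstacle. I would set $\sigma_\eta=\eta^{-1}$ on $\bigcup_{l,i}X^{l,i}_{\rho_0}$ and $1$ elsewhere, and for each $l$ solve $-\nabla\cdot\sigma_\eta\nabla u^l_\eta=0$ with datum $g_l$. The required claim is the multi-inclusion analogue of Proposition~\ref{prop:asymptotic-2}: for each $l$, $\norm{u^l_\eta-u^{Z_l}_{\rho_0}}_{H^{1-\delta}(Z_l)}\to 0$ as $\eta\to 0$. I expect this to be surmountable because the handles are pairwise disjoint, so the high-contrast theory of section~\ref{sec:highc} applies to their union with only notational changes, and each handle pins the potential of $u^l_\eta$ to $g_l$ at its own contact point (Lemma~\ref{lem:family}); hence the boundary data felt on $\partial Z_l$ comes only from the two handles attached to $Z_l$, while the remaining handles influence $u^l_\eta$ only near the distant cylinders $Z_{l'}$, $l'\neq l$. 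Granting this, Steps 5--6 carry over: mollifying $\sigma_{\eta_0}$ leaves it unchanged on each $B(O_l,r_l)$, so Schauder estimates give $C^1(\overline{B(O_l,r_l)})$ convergence of the corresponding solutions, and the set
\[
P=\{\sigma\in C^\infty(\overline X):\sigma>1/2\text{ in }X,\ \nu\cdot(R\nabla u^{\sigma,l})>0\text{ on }\partial B(O_l,r_l)\text{ for all }l\},
\]
with $u^{\sigma,l}$ the solution with datum $g_l$, contains the smoothed high-contrast conductivity and is therefore nonempty and open. For any $\sigma\in P$ the Brouwer fixed point theorem forces $R\nabla u^{\sigma,l}$, hence $\nabla u^{\sigma,l}$, to vanish somewhere in $B(O_l,r_l)\subset X$ for every $l$, which is the claim.
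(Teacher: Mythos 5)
Your proposal is correct and follows essentially the same route as the paper's (largely omitted) proof: discard constant data, pick $2L$ distinct boundary points using continuity, route pairwise disjoint tubes/handles/cylinders in $\Rm^3$, run Steps 2--6 of Theorem~\ref{thm:main} with the unions $\tilde X^i_\rho=\bigcup_l X^{i,l}_\rho$ in place of $X^i_\rho$, and finish with an open set $P$ defined by finitely many conditions plus Brouwer's theorem. The one point you flag as the ``main obstacle'' requires no new work: Proposition~\ref{prop:asymptotic-2} is already stated for inclusions that are possibly not connected, so it applies verbatim to the unions of disjoint handles, and since each cylinder $Z_l$ is a connected component of $X_+$ whose boundary meets only its own two handles, its limiting boundary data are exactly the constants $g_l(x^l_{(1)})$, $g_l(x^l_{(2)})$ you describe.
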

\begin{proof}
Without loss of generality, assume that $X$ is connected and that  $g_l$ is not constant for every $l$. Consider the set
\[
A=\{(x^{1}_{(1)},x^{1}_{(2)},\dots,x^{L}_{(1)},x^{L}_{(2)})\in(\partial X)^{2L}:g_l(x^{l}_{(1)})\neq g_l(x^{l}_{(2)}),\;l=1,\dots L\}.
\]
Note that $A$ is non-empty (since $g_l$ is not constant) and relatively open in $(\partial X)^{2L}$ (since $g_l$ is continuous). Thus, we can choose $(x^{l}_{(i)})_{i=1,2}^{l=1,\dots,L}\in A$ such that all the points considered are distinct, namely
\[
\#\{x^{l}_{(i)}:i=1,2,\;l=1,\dots,L\}=2L.
\]
Without loss of generality, assume that $g_l(x^{l}_{(i)})=i$ for every $l$. Since the points are all distinct and we are in three dimensions, we can construct $L$ smooth open tubes $T_1,\dots,T_L\subset \Rm^3$ such that:
\begin{itemize}
\item the tubes are pairwise disjoint, namely $T_{l}\cap T_{l'}=\emptyset$ if $l\neq l'$;
\item and $x^{l}_{(i)}\in T_l$ for every $l=1,\dots,L$ and $i=1,2$.
\end{itemize}
In other words, the tube $T_l$ connects the two points $x^{l}_{(1)}$ and $x^{l}_{(2)}$.

We now construct suitable inclusions for each $l=1,\dots,L$. For $\rho\in (0,\tilde\rho)$ let  $Z^{l}$ and $X^{1,l}_{\rho}$, $X^{2,l}_{\rho}$ be  as in the proof of Theorem~\ref{thm:main}, corresponding to the points $x^{l}_{(1)}$ and $x^{l}_{(2)}$, constructed in such a way that $X^{1,l}_{\rho},X^{2,l}_{\rho},Z^{l}\subset T_l$. More precisely, $Z^{l}$ is obtained by translating, rotating and scaling $Z$, namely $Z^{l}=a_l Z+z_l$, where $a_l>0$ and $z_l\in T_l$ is the center of $Z^{l}$. The subdomains $X^{1,l}_{\rho}$ and $X^{2,l}_{\rho}$ are obtained  via smooth deformations of $X^1_{\rho}$ and $X^2_{\rho}$, and connect the boundary of $Z^{l}$ to  $x^{l}_{(1)}$ and $x^{l}_{(2)}$. Set
\[
\tilde Z = \bigcup_{l=1}^L Z^{l},\qquad \tilde X^i_{\rho} = \bigcup_{l=1}^L X^{i,l}_{\rho}.
\]

The rest of the proof is very similar to that of Theorem~\ref{thm:main}, with $\tilde Z$ and $\tilde X^i_{\rho}$ taking the role of $Z$ and $X^i_\rho$, respectively. The details are omitted.
\end{proof}

Before considering the case of Neumann boundary conditions, we consider another  generalization of Theorem~\ref{thm:main}: it is possible to construct conductivities yielding an arbitrary finite number of critical points located in arbitrarily small balls given a priori.

\begin{theorem}\label{thm:main-local}
Let $X\subset\Rm^3$ be a bounded Lipschitz domain and let $B_1,\dots,B_M\subseteq X$ be pairwise disjoint open balls. Take $g\in C(\partial X)\cap H^{\frac12}(\partial X)$. Then there exists a nonempty open set of conductivities $\sigma\in C^\infty(\overline{X})$, $\sigma\ge 1/2$, such that the solution $u\in H^1(X)$ to
\begin{equation*}
 -\nabla\cdot\sigma\nabla u =0 \quad \mbox{ in } X,\qquad u=g \quad\mbox{ on } \partial X
\end{equation*}
has a critical point in $B_m$ for every $m=1,\dots,M$, namely $\nabla u(x_m)=0$ for some $x_m\in X$ (depending on $\sigma$).
\end{theorem}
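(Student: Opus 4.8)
The plan is to reduce Theorem~\ref{thm:main-local} to the single--critical--point construction of Theorem~\ref{thm:main} by running $M$ independent copies of that construction inside disjoint tubes, exactly as was done in Theorem~\ref{thm:multiple} for multiple boundary conditions. The only structural difference is that here we have a \emph{single} boundary condition $g$ but want critical points in $M$ prescribed balls $B_1,\dots,B_M$, whereas in Theorem~\ref{thm:multiple} we had $L$ boundary conditions and sought one critical point each. The common mechanism is the same: build geometrically separated high--contrast inclusions, each of which forces a saddle--type configuration of $\nabla u$ on a small sphere, and then invoke the topological degree argument (Step~6) separately in each ball.

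Concretely, first I would dispose of the trivial case: if $g$ is constant the solution is constant and every point is critical, so assume $g$ takes two distinct values and fix $x_{(1)},x_{(2)}\in\partial X$ with $g(x_{(i)})=i$ after rescaling. The key geometric point is that, since $B_1,\dots,B_M$ are pairwise disjoint and we work in $\Rm^3$, we can place inside (a neighborhood of) each $B_m$ a scaled, rotated, translated copy $Z^m = a_m Z + z_m$ of the reference cylinder $Z$, with $z_m$ the center of $B_m$ and $a_m$ small enough that $\overline{Z^m}\subset B_m$. For each $m$ we then need \emph{two} handles $X^{1,m}_\rho$ and $X^{2,m}_\rho$ connecting the two boundary pieces of $Z^m$ (the lateral discs and the lateral surface, respectively) to the \emph{same} pair of boundary points $x_{(1)},x_{(2)}$. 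The tubes carrying these handles must be pairwise disjoint across different $m$; this is where one uses that distinct curves in $\Rm^3$ joining the fixed endpoints can be chosen disjoint, and that near the shared endpoints $x_{(i)}$ the $M$ handles can be arranged to meet $\partial X$ in disjoint boundary balls $B(x_{(i)},\rho)$ only through a single handle (one may, for instance, split $B(x_{(i)},\rho)$ into $M$ disjoint sub-patches, one per copy, so that the uniform--Lipschitz and creased hypotheses of Lemma~\ref{lem:family} remain valid with constants independent of $\rho$).

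With the geometry in place, the analytic steps transcribe verbatim from Theorem~\ref{thm:main}. Setting $\tilde Z=\bigcup_m Z^m$ and $\tilde X^i_\rho=\bigcup_m X^{i,m}_\rho$, the limiting harmonic function $\tilde u^*$ on $\tilde Z$ restricts on each $Z^m$ to the rescaled reference saddle $u^*(a_m^{-1}(x-z_m))$, so by \eqref{eq:ustar} we get, for each $m$, a radius $r_m\in(0,a_m)$ and $\mu_m>0$ with $\nu\cdot(R_m\nabla\tilde u^*)\ge 8\mu_m$ on $\partial B(z_m,r_m)$, where $R_m$ is the reflection adapted to the orientation of $Z^m$. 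Then Lemma~\ref{lem:family}, Lemma~\ref{lem:jumps}, Proposition~\ref{prop:zaremba}, and Proposition~\ref{prop:asymptotic-2} give, successively as $\rho\to0$, $\eta\to0$, and $\epsilon\to0$, the inequalities $\nu\cdot(R_m\nabla u^\epsilon)\ge \mu_m$ on each $\partial B(z_m,r_m)$ simultaneously (finitely many $m$, so a single choice of $\rho_0,\eta_0,\epsilon_0$ works). The pathological set $P$ is now defined by requiring $\sigma>1/2$ and $\nu\cdot(R_m\nabla u^\sigma)>0$ on $\partial B(z_m,r_m)$ for \emph{all} $m=1,\dots,M$; it is open and nonempty. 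Finally, applying the Brouwer degree argument (Step~6 of Theorem~\ref{thm:main}) independently on each ball yields a zero of $R_m\nabla u^\sigma$, hence a critical point of $u^\sigma$, in each $B(z_m,r_m)\subset B_m$.

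The only genuine obstacle is the disjointness of the $2M$ handles together with the uniform--Lipschitz/creased control near the two shared endpoints $x_{(1)},x_{(2)}$: unlike Theorem~\ref{thm:multiple}, where the endpoints were all distinct, here all $M$ ``type-$i$'' handles converge to the \emph{same} point $x_{(i)}$. I expect this to be handled by the same codimension argument that makes the whole paper work in $n\ge3$ --- generic curves joining two fixed points are disjoint away from the endpoints --- combined with subdividing each boundary ball $B(x_{(i)},\rho)$ into $M$ disjoint Lipschitz patches so that Lemma~\ref{lem:family} applies on each handle with $\rho$--independent constants. Once this geometric bookkeeping is set up, the remainder is a routine repetition of the argument of Theorem~\ref{thm:main}, and the details may reasonably be omitted.
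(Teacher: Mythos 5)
Your proposal is correct in substance and its analytic core is exactly the paper's: run the three limits ($\rho\to 0$ via Lemma~\ref{lem:family}, $\eta\to0$ via Proposition~\ref{prop:asymptotic-2}, mollification via interior Schauder estimates) uniformly over the finitely many cylinders $Z_m\subset B_m$, define the open set $P$ by the sphere conditions $\nu\cdot(R\nabla u^\sigma)>0$ on all $\partial B(z_m,r_m)$, and apply the Brouwer argument on each ball. Where you diverge from the paper is the geometry of the inclusions, and the divergence is instructive. You transplant the geometry of Theorem~\ref{thm:multiple}: $2M$ pairwise disjoint handles, $M$ of each type, all type-$i$ handles funneling down to the same boundary point $x_{(i)}$. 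This forces the complication you yourself flag as ``the only genuine obstacle'': keeping the $M$ type-$i$ handles disjoint near a common endpoint, subdividing $B(x_{(i)},\rho)\cap\partial X$ into $M$ sub-patches, and re-proving Lemma~\ref{lem:family} for contact sets that are proper sub-patches of a boundary ball (with $\rho$-uniform Lipschitz constants). That generalization does go through --- the proof of Lemma~\ref{lem:family} only needs uniform trace/extension bounds on the patches and the decay of $\norm{g-g(x_{(i)})}_{L^\infty}$ and $\norm{g-g(x_{(i)})}_{H^{1/2}}$ on sets shrinking to $x_{(i)}$, and Proposition~\ref{prop:asymptotic-2} explicitly allows disconnected inclusions --- but the paper avoids it entirely by a simpler observation: since all type-$i$ branches must carry the \emph{same} limiting value $g(x_{(i)})=i$, there is no reason to keep them disjoint. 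The paper takes $X^1_\rho$ to be a \emph{single connected} Lipschitz subdomain joining all $2M$ lateral discs of the cylinders to $x_{(1)}$, and $X^2_\rho$ a single connected subdomain joining all $M$ lateral surfaces to $x_{(2)}$; then each inclusion meets $\partial X$ in exactly one ball $B(x_{(i)},\rho)\cap\partial X$, Lemma~\ref{lem:family} applies verbatim, and the only disjointness to arrange is between $X^1_\rho$ and $X^2_\rho$, precisely as in Theorem~\ref{thm:main}. So your route is workable but carries avoidable geometric bookkeeping; the paper's merged-handle construction is the cleaner reduction.
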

\begin{proof}
This result follows applying the same argument used in the proof of Theorem~\ref{thm:main}, the only difference lies in the construction of the inclusions where the conductivity takes large values.

If $g$ is constant, the result is obvious. Otherwise, for $i=1,2$ take $x_{(i)}\in\partial X$ such that $g(x_{(i)})=i$. For every $m=1,\dots,M$, let $Z_m$ be obtained by scaling and translating $Z$ in such a way that $Z_m\subset B_m$.  The $2M$ lateral discs of the cylinders $Z_m$ are connected to $x_{(1)}$ with a connected Lipschitz subdomain $X^1_{\rho}$ satisfying the assumptions of Lemma~\ref{lem:family}. Similarly, the $M$ lateral surfaces of $Z_m$ are connected to $x_{(2)}$ with a connected Lipschitz subdomain $X^2_{\rho}$ satisfying the assumptions of Lemma~\ref{lem:family}. In particular, $\partial X^i_{\rho}\cap\partial X=B(x_{(i)},\rho)\cap\partial X$ for $i=1,2$. Moreover, we choose $X^i_{\rho}$ in such a way that $X^i_{\rho}$, with respect to the decomposition of the boundary given by $(\partial X^i_{\rho}\cap\partial X)^\circ$ and $\partial X^i_{\rho}\setminus\partial X$, is creased, according to Definition~\ref{def:creased}.

Proceeding as in the proofs of Theorems~\ref{thm:main}, we obtain that for $\rho$ and $\eta$ small enough, the corresponding solution will have at least one critical point in each $Z_m\subset B_m$. Further, the topology of the gradient field is preserved by suitable smooth deformations of the conductivity, and the result is proved.
\end{proof}

\subsection{Neumann Boundary Conditions}
\label{sec:mainN}

We conclude this section by a construction of critical points when the prescribed boundary conditions are of Neumann type.  We consider only the case of a single boundary condition and of a single critical point, although the result also extends to a finite number of boundary conditions and critical points, as in the setting of Dirichlet boundary conditions.

\begin{theorem}\label{thm:main-neumann}
Let $X\subset\Rm^3$ be a connected bounded Lipschitz domain. Take $g\in C(\partial X)$ such that $\int_{\partial X}g\,ds=0$. Then there exists a nonempty open set of conductivities $\sigma\in C^\infty(\overline{X})$, $\sigma\ge 1/2$ such that the solution $u\in H^1(X)/\Rm$ to
\begin{equation*}
 -\nabla\cdot\sigma\nabla u =0 \quad \mbox{ in } X,\qquad \sigma\partial_\nu u=g \quad\mbox{ on } \partial X
\end{equation*}
has a critical point in $X$, namely $\nabla u(x)=0$ for some $x\in X$ (depending on $\sigma$).
\end{theorem}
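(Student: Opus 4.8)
The plan is to run the proof of Theorem~\ref{thm:main} essentially verbatim---same cylinder $Z$, same saddle-producing limiting function, same degree-theoretic conclusion---and to concentrate all the new work on the single place where the Neumann setting genuinely differs: the mechanism forcing the discs and the lateral surface of $Z$ to sit at \emph{different} potentials. If $g\equiv 0$ the solution is constant and every point is critical, so assume $g\not\equiv 0$; since $\int_{\partial X}g\,ds=0$ and $g$ is continuous, $g$ changes sign and $\{g>0\},\{g<0\}$ are nonempty open subsets of $\partial X$. As before I place $Z$ in $X$, use its two flat discs and its lateral surface as $S_1,S_2$, and attach to them two disjoint Lipschitz handles $\Xu_{\rho_0},\Xd_{\rho_0}$ in which the conductivity is set to $\eta^{-1}$. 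The additive constant in $H^1(X)/\Rm$ is irrelevant throughout, since $\nabla u$ does not see it.

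The essential difference, and the main obstacle, is this. In the Dirichlet proof the handle value is pinned at leading order: on the attachment $A_i=\partial \Xii_{\rho_0}\cap\partial X$ the exterior datum is Dirichlet, so Lemma~\ref{lem:family} forces the equipotential handle to the value $g(x_{(i)})=i$, and shrinking $\rho$ is exactly what sharpens this pinning. For Neumann data this collapses: on $A_i$ the condition reads $\sigma\,\partial_\nu u=g$ with $\sigma=\eta^{-1}$, so $\partial_\nu u=\eta\,g\to 0$ and the leading-order handle problem is \emph{pure homogeneous Neumann}, leaving the handle constant undetermined at leading order. Moreover, attaching through vanishing patches as in Lemma~\ref{lem:family} sends the injected currents $\int_{A_i}g$ to $0$, and writing the cylinder limit as $c_2+(c_1-c_2)\phi$ (with $\phi$ harmonic, $1$ on the discs and $0$ on the lateral surface) and imposing zero net flux through each handle then forces $c_1=c_2$, i.e.\ \emph{no} saddle. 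Hence the Neumann construction must inject a nonzero net current through each handle, the constants being fixed at next order by a flux-balance (capacitance) condition rather than by a boundary trace.

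Accordingly I would choose attachment regions carrying sign-definite flux: take $A_1\subset\{g>0\}$ and $A_2\subset\{g<0\}$ with $A_1\cup A_2\supset\{\,\abs{g}\ge\eps\,\}$, so that $I_1:=\int_{A_1}g>0$, $I_2:=\int_{A_2}g<0$, while $\norm{g}_{L^\infty(\Sigma)}\le\eps$ on the remaining boundary $\Sigma=\partial X\setminus(A_1\cup A_2)$. The Neumann high-contrast result of $\S$\ref{sec:Nasymp} (the analogue of Proposition~\ref{prop:asymptotic-2}) identifies the $\eta\to0$ limit in $Z$ as the harmonic $u^*$ taking constants $c_1,c_2$ on the discs and lateral surface, where $(c_1,c_2)$ solves ``total current into handle $i$ equals $I_i$''. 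Introducing harmonic potentials $\psi_i$ ($\psi_i=\delta_{ij}$ on the handle interfaces $\Gamma_j$, insulated on $\Sigma$) and $w$ (harmonic, $0$ on $\Gamma_1\cup\Gamma_2$, Neumann datum $g$ on $\Sigma$), one has $\psi_2=1-\psi_1$, hence a rank-one conductance matrix with $E:=\int_\Omega\abs{\nabla\psi_1}^2>0$, and
\[
c_1-c_2=\frac{(I_1-I_2)-\bigl(F_1(w)-F_2(w)\bigr)}{2E},\qquad F_i(w)=\int_{\Gamma_i}\partial_\nu w\,ds.
\]
Since $\norm{g}_{L^\infty(\Sigma)}\le\eps$ makes $w$, and thus $F_1(w)-F_2(w)$, of size $O(\eps)$, while $I_1-I_2\ge\int_{\{\abs{g}\ge\eps\}}\abs{g}\,ds$ stays bounded below for $\eps$ small, I conclude $c_1\neq c_2$.

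With $c_1\neq c_2$ secured, the remainder mirrors Theorem~\ref{thm:main}. The symmetry of $Z$ gives $\nabla u^*(O)=0$ and $D^2u^*(O)={\rm Diag}(-2\lambda,\lambda,\lambda)$, and the Hopf-lemma argument of Step~2 now yields $\lambda\neq0$, its sign being that of $c_1-c_2$; choosing $R={\rm Diag}(-1,1,1)$ when $\lambda>0$ and $R={\rm Diag}(1,-1,-1)$ when $\lambda<0$, the quantity $\nu\cdot(R\nabla u^*)$ has a fixed sign bounded away from $0$ on some $\partial B(0,r)$. Passing from $u^*$ to $u_\eta$ (via Lemma~\ref{lem:jumps}, Proposition~\ref{prop:zaremba} and the Neumann high-contrast estimate) and then to its mollified-conductivity version in $C^1(\overline{B(0,r)})$, exactly as in Steps~3--5, preserves the sign of $\nu\cdot(R\nabla u)$ on $\partial B(0,r)$; this exhibits a nonempty open set $P$ of smooth $\sigma\ge 1/2$, and the Brouwer fixed point theorem forces $R\nabla u^{\sigma}$, hence $\nabla u^{\sigma}$, to vanish in $B(0,r)$ for every $\sigma\in P$. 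I expect the genuinely delicate points to be (i) the flux-balance characterisation of the limit together with the quantitative control of $w$ that secures $c_1\neq c_2$, and (ii) checking that the Neumann asymptotics of $\S$\ref{sec:Nasymp} still deliver $C^1$ control on the fixed ball $B(0,r)$ now that the handles meet $\partial X$ along sets of positive surface measure.
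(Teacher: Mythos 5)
Your proposal is correct, and its skeleton coincides with the paper's proof: invoke the Neumann high-contrast asymptotics of \S\ref{sec:Nasymp} (Proposition~\ref{prop:asymptotic-neumann}) to identify the limiting constant potentials of the two handles, reduce the whole theorem to these two constants being distinct, and then rerun Steps 2--6 of Theorem~\ref{thm:main}. In fact your capacitance identity is the paper's formula in disguise: your $\psi_1$ is exactly the paper's $v$, a Green identity gives $F_1(w)-F_2(w)=-\int_\Gamma(2v-1)g\,ds$, and combining this with $I_1+I_2+\int_\Gamma g\,ds=0$ collapses your expression for $c_1-c_2$ to $\bigl(\int_{D^1}g\,ds+\int_\Gamma vg\,ds\bigr)/E$, which is precisely $\beta_1-\beta_2$ in Proposition~\ref{prop:asymptotic-neumann}. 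The genuine difference is how the nonvanishing of this quantity is secured, i.e.\ how condition \eqref{eq:nonzero} is verified. The paper takes $D^1\subsetneq\{g>0\}$ and $D^2=\{g<0\}$ \emph{exactly}, so that $g\ge 0$ on the leftover boundary $\Gamma$; then $0\le v\le 1$ makes $\int_\Gamma vg\,ds\ge 0$ while $\int_{D^1}g\,ds>0$, and no estimate at all is needed. You instead cover $\{|g|\ge\eps\}$ by the two patches and argue perturbatively ($|F_1(w)-F_2(w)|\le\eps|\Sigma|$ versus $I_1-I_2$ bounded below as $\eps\to0$); this also works, and is best justified via the Green identity above rather than via pointwise control of $\partial_\nu w$ on the interfaces, which Zaremba regularity does not provide. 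The paper's choice buys brevity; yours buys flexibility, since you only need a sandwich $\{g\ge\eps\}\subset A_1\subset\{g>0\}$, which makes it easier to arrange the patches to be admissible and creased than the paper's rigid choice $D^2=\{g<0\}$. Three smaller remarks: (a) your explicit flip of $R$ according to the sign of $\lambda$ addresses a point the paper silently glosses over (its choice yields $\beta_2<\beta_1$, so the saddle has the opposite orientation to the Dirichlet case), and is correct; (b) your motivational claim that vanishing patches would \emph{force} $c_1=c_2$ is inaccurate---zero injected current gives $c_1-c_2=E^{-1}\int_\Gamma vg\,ds$, which need not vanish but merely ceases to be controllable---though nothing in your proof relies on it; (c) your worry (ii) is not an issue, since $B(0,r)$ is compactly contained in $Z$, so interior elliptic regularity upgrades the $H^{1-\delta}$ convergence to $C^1\bigl(\overline{B(0,r)}\bigr)$ exactly as in the Dirichlet proof, where the handles likewise met $\partial X$ on sets of positive surface measure.
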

\begin{proof}
The proof follows the same structure of the proof of Theorem~\ref{thm:main}, and so only the most relevant differences will be pointed out. Without loss of generality, assume  that $g\not\equiv 0$.

The construction of the subdomains $\Xii$ and $Z$ is very similar to the one presented above, with the only difference lying in the contact surfaces $D^i = \partial \Xii \cap \partial X$. Making the surfaces $D^i$ very small is not necessary in this context. On the other hand, we observe from our results obtained in Proposition \ref{prop:asymptotic-neumann} and the estimates in \eqref{eq:neumann-convergence} that the only requirement we need to verify is
\begin{equation}\label{eq:nonzero}
 \int_{D^1} g\,ds+\int_\Gamma vg\,ds\neq 0,
\end{equation}
where $v$ is the unique solution to
\[
         \Delta v=0\quad\text{in $X_+$,} \qquad
       \partial_\nu v=0 \quad\text{on $\Gamma$,} \qquad
         v=1 \quad\text{on $N^1$,}\qquad
         v=0 \quad\text{on $N^2$,}
\]
$N^i = \partial \Xii\setminus\overline{D^i}$ and $X_+=X\setminus(\overline{\Xu\cup \Xd})$.
Since $0\le v\le 1$ in $X_+$ by the Hopf lemma, \eqref{eq:nonzero} can be satisfied choosing $D_1\subsetneq\{x\in\Omega\,:\,g(x)>0\}$ and $D_2=\{x\in\Omega\,:\,g(x)<0\}$, which imply  $g\ge0$ on $\Gamma$.

In view of \eqref{eq:nonzero}, with the notation of Proposition~\ref{prop:asymptotic-neumann}, we have $\beta_1\neq \beta_2$. Thus, by Proposition~\ref{prop:asymptotic-neumann} the limit solution $u^*$ as $\eta\to 0$ in the cylinder $Z$ will have two different constant boundary values on the two discs and on the lateral surface. The rest of the proof follows exactly as in the proof  of Theorem~\ref{thm:main}.
\end{proof}

\section{The Zaremba Problem}
\label{sec:zaremba}

The two handles $\Xii$ constructed in the previous section are two disjoint subdomains of $X$ whose boundaries are allowed to meet on a small set (of $2-$Haussdorf measure zero). Moreover, Dirichlet conditions are imposed on their part of the boundary that coincides with that of $X$, whereas Neumann conditions are imposed on the rest of their boundaries. The Laplace equation with such mixed boundary conditions is referred to as the Zaremba problem. Following \cite{2007-mitreas}, we present here the results we need in this paper.

We consider the following mixed boundary value problem for the Laplacian.
Let $\Omega\subseteq \Rm^3$ be a bounded and  Lipschitz domain, such that each connected component of $\Omega$ has a connected boundary. Let $D,N\subseteq \partial \Omega$ be disjoint, open, such that $D\neq\emptyset$, $\overline{D}\cap\overline{N}=\partial D =\partial N$ and $\overline D\cup\overline N=\partial\Omega$. We consider
\begin{equation}\label{eq:zaremba}
\left\{ \begin{array}{ll}
         \Delta u=0&\quad\text{in $\Omega$,}\\
         u=g&\quad\text{on $D$,}\\
         \partial_\nu u=f &\quad\text{on $N$,}
        \end{array}
        \right.
\end{equation}
and are interested in stability estimates of the form
\[
\norm{u}_{H^s(\Omega)}\le C(\norm{g}_{H^{s-\frac{1}{2}}(D)}+\norm{f}_{H^{s-\frac{3}{2}}(N)}),
\]
for $s\in [1-\delta,1+\delta]$. This problem was studied in \cite{2007-mitreas} in the case $N\neq \emptyset$ and previously in \cite{1995-jerison-kenig} in the case $N=\emptyset$, and we report here the main results of interest in this paper.

We assume $D$ and $N$ to be \emph{admissible patches} as in \cite{2007-mitreas}: this essentially means that the interface between $D$ and $N$ is Lipschitz continuous. For the sake of completeness, we now provide a precise definition. For each point $x=(x_1,x_2,x_3)$ in $\mathbb R^3$, we set $x'=(x_1,x_2)$.
\begin{definition}
Let $\Omega\subset\mathbb R^3$ be a bounded Lipschitz domain. An open set $\Sigma\subset\partial\Omega$ is called an {\em admissible patch} if for every $x_0\in\partial\Sigma$ there exists a new system of orthogonal axes such that $x_0$ is the origin and the following holds. There exists a cube
$Q=Q_1\times Q_2\times Q_3\subset \mathbb R\times\mathbb R\times\mathbb R$ centered at $0$ and two Lipschitz functions
\begin{eqnarray*}
&&\varphi\,:\,Q'=Q_1\times Q_2\longrightarrow Q_3,\qquad \varphi(0)=0\\
&&\psi\,:\,Q_2\longrightarrow Q_1,\qquad \psi(0)=0,
\end{eqnarray*}
satisfying
\begin{eqnarray*}
&&\Sigma\cap Q=\{(x',\varphi(x'))\,:\, x'\in Q'\textrm{ and }\psi(x_2)<x_1\},\\
&&(\partial\Omega\setminus\Sigma)\cap Q=\{(x',\varphi(x'))\,:\, x'\in Q'\textrm{ and }\psi(x_2)>x_1\},\\
&&\partial\Sigma\cap Q=\{(\psi(x_2),x_2,\varphi(\psi(x_2),x_2))\,:\, x_2\in Q_2\}.
\end{eqnarray*}
\end{definition}
We also assume that $\Omega$, with the decomposition of the boundary given by $D$ and $N$, is a {\em creased} domain. In essence, this means that $D$ and $N$ are separated by a Lipschitz interface and the angle between $D$ and $N$ is smaller than $\pi$.
\begin{definition}
\label{def:spcreased}
Let $\Omega$  be a Lipschitz domain in $\mathbb R^3$ and suppose that $D,N\subset\partial\Omega$ are two non-empty, disjoint admissible patches satisfying $\overline D\cap\overline N=\partial D=\partial N$ and $\overline D\cup\overline N=\partial\Omega$. The domain $\Omega$ is called {\em special creased} provided that the following conditions hold.
\begin{itemize}
\item[(i)] There exists a Lipschitz function $\phi\,:\,\mathbb R^{2}\to\mathbb R$ with the property that
$\Omega=\{(x',x_3)\in\mathbb R^3\,:\,x_3>\phi(x')\}$.
\item[(ii)] There exists a Lipschitz function $\psi\,:\,\mathbb R\to\mathbb R$ such that
$$N=\{(x_1,x_2,x_3)\in\mathbb R^3\,:\,x_1>\psi(x_2)\}\cap\partial\Omega$$
and
$$D=\{(x_1,x_2,x_3)\in\mathbb R^3\,:\,x_1<\psi(x_2)\}\cap\partial\Omega.$$
\item[(iii)] There exist $\delta_D,\delta_N\geq0$ with $\delta_D+\delta_N>0$ such that
$$\frac{\partial\phi}{\partial x_1}\geq\delta_N \textrm{ almost everywhere on } \{(x_1,x_2,x_3)\in\mathbb R^3\,:\,x_1>\psi(x_2)\}$$
and
$$\frac{\partial\phi}{\partial x_1}\leq-\delta_D \textrm{ almost everywhere on } \{(x_1,x_2,x_3)\in\mathbb R^3\,:\,x_1<\psi(x_2)\}.$$
\end{itemize}
\end{definition}
\begin{definition}\label{def:creased}
Let $\Omega$ be a bounded Lipschitz domain in $\mathbb R^3$ with connected boundary and suppose $D,N\subset \partial\Omega$ are two non-empty disjoint admissible patches satisfying $\overline D\cap\overline N=\partial D=\partial N$ and $\overline D\cup\overline N=\partial\Omega$. The domain $\Omega$ is called {\em creased} provided that the following conditions hold.
\begin{itemize}
\item[(i)] There exist $P_i\in\partial\Omega$, $i=1,\dots,M$ and $r>0$ such that $\partial\Omega\subset\cup_{i=1}^M B(P_i,r)$.
\item[(ii)] For each $i=1,\dots,M$ there exist a coordinate system $\{x_1,x_2,x_3\}$ in $\mathbb R^3$ with origin at $P_i$ and a Lipschitz function $\phi_i\,:\,\mathbb R^{2}\to\mathbb R$ such that the set $\Omega_i=\{(x',x_3)\in\mathbb R^3:x_3>\phi_i(x')\}$, with boundary decomposition $\partial\Omega_i=N_i\cup D_i$, is a special creased domain in the sense of Definition \ref{def:spcreased} and
    $$\begin{array}{l}\Omega\cap B(P_i,2r)=\Omega_i\cap B(P_i,2r),\\
    D\cap B(P_i,2r)=D_i\cap B(P_i,2r),\\
    N\cap B(P_i,2r)=N_i\cap B(P_i,2r).\end{array}$$
\end{itemize}
\end{definition}

We have the following result on traces. While the results in \cite{2007-mitreas} are expressed in terms of general Besov spaces, here we only need the simpler case of Sobolev spaces using the identification $B^{2,2}_s = H^s$ \cite[Exercise 6.5.2]{2014-grafakos}.
\begin{lemma}[\cite{1984-jonsson-wallin,2007-mitreas}]\label{lem:traces}
Let $\Omega\subseteq \Rm^3$ be a bounded Lipschitz domain with connected boundary, and $\Sigma\subseteq\partial\Omega$ be an admissible patch. Take $s\in (\frac{1}{2},\frac{3}{2})$.
\begin{enumerate}
\item The trace operator ${\rm Tr}\colon H^s(\Omega)\to H^{s-\frac{1}{2}}(\partial\Omega)$ is bounded.
\item There exists a bounded extension operator ${\rm Ext}_{\partial\Omega}\colon H^{s-\frac{1}{2}}(\partial\Omega) \to H^s(\Omega)$ such that ${\rm Tr}\circ{\rm Ext}_{\partial\Omega}={\rm Id}$.
\item There exists a bounded extension operator ${\rm Ext}_\Sigma \colon H^{s-\frac{1}{2}}(\Sigma) \to H^{s-\frac{1}{2}}(\partial\Omega)$ such that  ${\rm R}_\Sigma\circ{\rm Ext}_\Sigma={\rm Id}$, where ${\rm R}_\Sigma u=u_{|\Sigma}$.
\item The trace operator
\[
{\rm Tr}_\nu\colon \{u\in H^s(\Omega):\Delta u=0 \text{ in }\Omega\}\longrightarrow H^{s-\frac{3}{2}}(\Sigma),\qquad u\mapsto \partial_\nu u_{|\Sigma}
\]
 is bounded.
\end{enumerate}
\end{lemma}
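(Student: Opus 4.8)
The four assertions are the Hilbert-space specializations of trace and extension theorems established in the Besov-space framework of \cite{1984-jonsson-wallin} and \cite{2007-mitreas}. The plan is to check that our standing hypotheses ($\Omega$ a bounded Lipschitz domain in $\Rm^3$ with connected boundary, $\Sigma$ an admissible patch, and $s\in(\frac12,\frac32)$) match those required there, and then to transfer each statement from the scale $B^{2,2}_\bullet$ to $H^\bullet$ using the identification $B^{2,2}_s=H^s$ (see \cite[Exercise 6.5.2]{2014-grafakos}).

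I would first dispose of items 1 and 2, the classical trace and extension theorems on a Lipschitz domain. Since $\partial\Omega$ is a $2$-set in $\Rm^3$ in the sense of Jonsson--Wallin, their theory provides, for $s\in(\frac12,\frac32)$, a bounded trace operator $\mathrm{Tr}\colon B^{2,2}_s(\Omega)\to B^{2,2}_{s-\frac12}(\partial\Omega)$ together with a bounded linear right inverse $\mathrm{Ext}_{\partial\Omega}$ with $\mathrm{Tr}\circ\mathrm{Ext}_{\partial\Omega}=\mathrm{Id}$. Rewriting the indices in Sobolev form gives items 1 and 2 verbatim; the constraint $s>\frac12$ is what makes the trace well defined and $s<\frac32$ is the natural ceiling on a Lipschitz boundary.

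Next I would address item 3, the extension from the patch $\Sigma$ to all of $\partial\Omega$. Because $\Sigma$ is an admissible patch, its relative boundary $\partial\Sigma$ is locally a Lipschitz graph inside $\partial\Omega$, so $(\Sigma,\partial\Omega\setminus\Sigma)$ is a Lipschitz decomposition of the boundary manifold. This Lipschitz regularity of the interface is precisely the hypothesis under which a Stein/Rychkov-type extension operator $\mathrm{Ext}_\Sigma\colon H^{s-\frac12}(\Sigma)\to H^{s-\frac12}(\partial\Omega)$, bounded and satisfying $\mathrm{R}_\Sigma\circ\mathrm{Ext}_\Sigma=\mathrm{Id}$, is constructed in \cite{2007-mitreas}, which I would specialize to $p=q=2$.

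The main obstacle is item 4, the conormal trace $\mathrm{Tr}_\nu\colon u\mapsto\partial_\nu u|_\Sigma$ on harmonic functions, valued in $H^{s-\frac32}(\Sigma)$. Since for fractional $s$ on a Lipschitz domain the normal derivative is not a pointwise object, I would define it weakly. First, as a functional on the full boundary: for $\phi\in H^{\frac32-s}(\partial\Omega)$ set $\langle\partial_\nu u,\phi\rangle=\int_\Omega\nabla u\cdot\nabla(\mathrm{Ext}_{\partial\Omega}\phi)\,dx$, which is independent of the chosen extension precisely because $u$ is harmonic, so that $\partial_\nu u\in(H^{\frac32-s}(\partial\Omega))^\ast=H^{s-\frac32}(\partial\Omega)$ with $\norm{\partial_\nu u}_{H^{s-\frac32}(\partial\Omega)}\le C\norm{u}_{H^s(\Omega)}$ by item 2 and Cauchy--Schwarz. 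Then restriction to $\Sigma$ is bounded into $H^{s-\frac32}(\Sigma)$, being (up to the usual duality identifications for patches) adjoint to the patch-extension operator of item 3 used at the dual index $2-s\in(\frac12,\frac32)$. Composing these two steps yields item 4 and matches the conormal-trace estimate proved for the Zaremba problem in \cite{2007-mitreas}. Assembling items 1--4 completes the proof.
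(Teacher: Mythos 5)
Your proposal coincides with the paper's treatment of this lemma: the paper offers no proof at all, importing the four statements directly from \cite{1984-jonsson-wallin,2007-mitreas} and passing from Besov to Sobolev scales via the identification $B^{2,2}_s=H^s$ \cite[Exercise 6.5.2]{2014-grafakos}, which is exactly your reduction for items 1--3. Your extra weak-formulation argument for item 4 (Green's identity defines $\partial_\nu u\in H^{s-\frac{3}{2}}(\partial\Omega)$ on harmonic functions, then restrict to $\Sigma$) is sound and standard, with one caveat: the distributional restriction to $\Sigma$ is \emph{not} literally the adjoint of ${\rm Ext}_\Sigma$ (that adjoint lands in $\bigl(H^{\frac{3}{2}-s}(\Sigma)\bigr)^*$ and picks up contributions from outside $\Sigma$); boundedness of the restriction into $H^{s-\frac{3}{2}}(\Sigma)$ follows more directly from the quotient-norm definition of negative-order spaces on patches, or from duality with extension by zero.
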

The main well-posedness result for the Zaremba problem then reads as follows.
\begin{proposition}[\cite{1995-jerison-kenig,2007-mitreas}]\label{prop:zaremba}
Under the above assumptions, there exists $\delta\in (0,\frac{1}{2})$ depending only on $\Omega$, $D$ and $N$ such that for every $s\in [1-\delta,1+\delta]$, problem \eqref{eq:zaremba} is well-posed and for every $g\in H^{s-\frac{1}{2}}(D)$ and $f\in H^{s-\frac{3}{2}}(N)$, we have
\[
\norm{u}_{H^s(\Omega)}\le C(\norm{g}_{H^{s-\frac{1}{2}}(D)}+\norm{f}_{H^{s-\frac{3}{2}}(N)})
\]
for some $C>0$ independent of $f$ and $g$. When $N=\emptyset$, we may choose $\delta=\frac{1}{4}$.
\end{proposition}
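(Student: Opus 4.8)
The plan is to derive the statement by specializing, to the Hilbert--Sobolev scale $p=q=2$, the solvability theory for the Zaremba problem on creased Lipschitz domains with admissible patches developed in \cite{2007-mitreas} (and, in the pure Dirichlet case $N=\emptyset$, the sharp Lipschitz-domain theory of \cite{1995-jerison-kenig}), after using the extension operators of Lemma~\ref{lem:traces} to place the problem in the framework treated there. First I would reduce to homogeneous Dirichlet data. Given $g\in H^{s-\frac12}(D)$, part~3 of Lemma~\ref{lem:traces} extends $g$ to $\tilde g={\rm Ext}_D\, g\in H^{s-\frac12}(\partial\Omega)$ and part~2 lifts it to $G={\rm Ext}_{\partial\Omega}\,\tilde g\in H^s(\Omega)$, with $\norm{G}_{H^s(\Omega)}\le C\norm{g}_{H^{s-\frac12}(D)}$. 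Writing $u=G+w$, the new unknown $w$ solves a Zaremba problem with right-hand side $-\Delta G\in H^{s-2}(\Omega)$, vanishing Dirichlet trace on $D$, and modified Neumann datum $f-\partial_\nu G$ on $N$, the latter well defined and estimated via part~4 of Lemma~\ref{lem:traces}. This is exactly the inhomogeneous mixed problem for which the cited works provide a Besov-scale solution operator.

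Next I would invoke the core solvability theorem. For a creased Lipschitz domain with $D,N$ admissible patches, \cite{2007-mitreas} establishes well-posedness together with the a priori estimate for the Zaremba problem in the Besov spaces $B^{p,q}_s$ for $(\frac1p,s)$ ranging over an open neighbourhood of the energy point $(\frac12,1)$, the size of this neighbourhood---hence the parameter $\delta$---depending only on the Lipschitz and creasedness constants of $(\Omega,D,N)$. Restricting to $p=q=2$ and using the identification $B^{2,2}_s=H^s$ from \cite[Exercise 6.5.2]{2014-grafakos} yields precisely the asserted inequality, with the closed interval $[1-\delta,1+\delta]$ recovered by choosing $\delta$ slightly below the radius of the open region. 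In the degenerate case $N=\emptyset$ the problem reduces to the inhomogeneous Dirichlet problem on a Lipschitz domain, and \cite{1995-jerison-kenig} supplies the sharp range, giving the explicit value $\delta=\frac14$. Uniqueness at the anchor point $s=1$ follows from Lax--Milgram, since $D\neq\emptyset$ forces a Poincar\'e inequality on $\{u\in H^1(\Omega):u_{|D}=0\}$ and hence coercivity of the Dirichlet form.

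The hard part is not something I would reprove here: the genuinely delicate analysis---invertibility of the boundary layer-potential operators coupling the Dirichlet and Neumann data across the interface $\partial D=\partial N$, the square-function and Rellich-type estimates underlying solvability, and the precise role of the creased geometry in controlling the solution near that interface---is entirely contained in \cite{1995-jerison-kenig,2007-mitreas}. Within the scope of this paper the only points requiring care are bookkeeping ones: verifying that the data spaces and their norms transform correctly under the reduction to homogeneous data so that the constant $C$ remains independent of $f$ and $g$, that the negative-order right-hand side $-\Delta G$ lands in the admissible source space of the cited solvability result, and that the standing hypotheses assumed here (admissible patches, creasedness in the sense of Definition~\ref{def:creased}, and $D\neq\emptyset$) match exactly those under which \cite{2007-mitreas} is stated.
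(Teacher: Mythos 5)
Your proposal is correct and matches the paper's treatment: Proposition~\ref{prop:zaremba} is stated there as a quoted result, with the well-posedness and a priori estimate taken directly from \cite{1995-jerison-kenig,2007-mitreas} after the identification $B^{2,2}_s=H^s$, exactly as you do. Your additional bookkeeping (reduction to homogeneous Dirichlet data via Lemma~\ref{lem:traces} and restriction of the Besov solvability region to $p=q=2$) is a faithful elaboration of what the citation implicitly requires, not a different route.
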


We conclude this section with a technical lemma on the Sobolev regularity of functions separately defined on subsets.
\begin{lemma}\label{lem:jumps}
Let $\Omega\subseteq \Rm^3$ be a bounded  Lipschitz domain with connected boundary, and $\Sigma_1,\Sigma_2\subseteq\partial\Omega$ be two disjoint admissible patches (with possibly non-disjoint boundaries).  Take $s\in (0,\frac{1}{2})$ and $g_i\in H^s(\Sigma_i)$ for $i=1,2$. Set $\Sigma=(\overline{\Sigma_1}\cup \overline{\Sigma_2})^\circ$ and define $g$ on $\Sigma$ by
\[
g=\chi_{\Sigma^1} g_1 +\chi_{\Sigma^2} g_2,
\]
where $\chi_S$ denotes the characteristic function of the set $S$.
Then $g\in H^s(\Sigma)$ and
\[
\norm{g}_{H^s(\Sigma)}\le C(\norm{g_1}_{H^s(\Sigma_1)} + \norm{g_2}_{H^s(\Sigma_2)})
\]
for some $C>0$ depending only on $\Sigma_1$, $\Sigma_2$ and $s$.
\end{lemma}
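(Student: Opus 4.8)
The plan is to reduce everything to the intrinsic Gagliardo--Slobodeckij characterization of the fractional Sobolev norm on the two-dimensional Lipschitz surface $\partial\Omega$, and to exploit the fact that for $s<\frac12$ functions in $H^s$ carry no trace information across an interface. Concretely, since $\Sigma_1$ and $\Sigma_2$ are admissible patches, their boundaries are Lipschitz and, working in the local Lipschitz graph coordinates supplied by the definition of admissible patch, the surface $H^s$-seminorm is comparable (with constants depending only on the Lipschitz data) to
\[
[w]_{H^s(\Sigma)}^2=\int_\Sigma\int_\Sigma\frac{|w(x)-w(y)|^2}{|x-y|^{2+2s}}\,d\sigma(x)\,d\sigma(y),
\]
the exponent $2+2s$ reflecting the dimension $2$ of the surface. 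Since $\Sigma_1\cap\Sigma_2=\emptyset$ and $\partial\Sigma_1\cap\partial\Sigma_2$ has surface measure zero, almost every point of $\Sigma$ lies in exactly one of the two patches, so that $g=E_1 g_1+E_2 g_2$, where $E_i$ denotes extension by zero from $\Sigma_i$ to $\Sigma$. Thus it suffices to show that each $E_i\colon H^s(\Sigma_i)\to H^s(\Sigma)$ is bounded with a constant depending only on $\Sigma_1,\Sigma_2,s$, after which the estimate follows from the triangle inequality.

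First I would establish the boundedness of the zero-extension. The $L^2$ part is an equality, so only the seminorm must be controlled. Splitting the double integral defining $[E_i g_i]_{H^s(\Sigma)}^2$ over $\Sigma_i\times\Sigma_i$, $(\Sigma\setminus\Sigma_i)\times(\Sigma\setminus\Sigma_i)$ and the two symmetric cross regions, the first block is exactly $[g_i]_{H^s(\Sigma_i)}^2$, the second vanishes because $E_i g_i\equiv0$ off $\Sigma_i$, and each cross term equals
\[
\int_{\Sigma_i}|g_i(x)|^2\Bigl(\int_{\Sigma\setminus\Sigma_i}\frac{d\sigma(y)}{|x-y|^{2+2s}}\Bigr)\,d\sigma(x).
\]
Using the Lipschitz graph structure, the inner integral is bounded by $C\,\delta_i(x)^{-2s}$, where $\delta_i(x)=\mathrm{dist}(x,\partial\Sigma_i)$; this is the standard computation $\int_{r}^{\mathrm{diam}}\rho^{1-(2+2s)}\,d\rho\sim r^{-2s}$ carried out in the flattened chart. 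It then remains to absorb the weighted term by invoking the \emph{fractional Hardy inequality}
\[
\int_{\Sigma_i}\frac{|g_i(x)|^2}{\delta_i(x)^{2s}}\,d\sigma(x)\le C\,\norm{g_i}_{H^s(\Sigma_i)}^2,\qquad 0<s<\tfrac12,
\]
valid on Lipschitz domains, which yields $\norm{E_i g_i}_{H^s(\Sigma)}\le C\norm{g_i}_{H^s(\Sigma_i)}$ and hence the claim.

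The main obstacle --- and the reason the hypothesis $s\in(0,\frac12)$ cannot be dropped --- is precisely this last step: the fractional Hardy inequality with the weight $\delta_i^{-2s}$ holds exactly in the subcritical range $s<\frac12$, where $H^s$ functions have no well-defined trace on $\partial\Sigma_i$ and extension by zero is bounded; for $s\ge\frac12$ the weighted integral need not be finite and multiplication by $\chi_{\Sigma_i}$ fails to preserve $H^s$. The remaining technical care lies in the geometric reduction: one must pass to the local Lipschitz charts of the admissible patches, verify that the resulting bi-Lipschitz change of variables distorts the Gagliardo seminorm only by constants (true for $s\in(0,1)$), and check that these constants, together with the Hardy constant, can be chosen uniformly from the finitely many charts covering $\partial\Omega$, so that the final $C$ depends only on $\Sigma_1$, $\Sigma_2$ and $s$.
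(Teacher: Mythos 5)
Your proof is correct, and it reaches the conclusion by a more self-contained route than the paper, though both rest on the same phenomenon: for $s<\frac12$, cutting off at a Lipschitz interface preserves $H^s$. The paper's proof is two lines: write $g=\chi_{\Sigma_1}({\rm Ext}_{\Sigma_1}g_1)+\chi_{\Sigma_2}({\rm Ext}_{\Sigma_2}g_2)$, use the bounded extension operators ${\rm Ext}_{\Sigma_i}\colon H^s(\Sigma_i)\to H^s(\partial\Omega)$ of Lemma~\ref{lem:traces}, part 3, and cite Maz'ya's theorem that $\chi_{\Rm^2_+}$ is a pointwise multiplier of $H^s(\Rm^2)$ exactly when $s<\frac12$. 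You instead write $g=E_1g_1+E_2g_2$ with $E_i$ the extension by zero and prove boundedness of $E_i\colon H^s(\Sigma_i)\to H^s(\Sigma)$ by hand: split the Gagliardo double integral, reduce the cross terms to $\int_{\Sigma_i}|g_i(x)|^2\,{\rm dist}(x,\partial\Sigma_i)^{-2s}\,d\sigma(x)$, and absorb that weighted integral by the fractional Hardy inequality, valid on Lipschitz sets precisely in the range $s<\frac12$. This is in essence a proof of the very multiplier/zero-extension fact the paper cites (it is Grisvard's classical argument that extension by zero is bounded on $H^s$ for $0<s<\frac12$), so nothing is conceptually different; what your version buys is independence from the cited operator-theoretic results and an explicit explanation of why the hypothesis $s<\frac12$ is sharp, at the cost of two technical points a complete write-up must address: (i) the identification of the paper's $H^s=B^{2,2}_s$ norm on the Lipschitz surface (Jonsson--Wallin) with the intrinsic Gagliardo seminorm you work with, which you assert but should justify or reference; and (ii) your bound $|x-y|\gtrsim{\rm dist}(x,\partial\Sigma_i)$ for $x\in\Sigma_i$, $y\in\Sigma\setminus\Sigma_i$ holds only when $x$ and $y$ lie in a common graph chart --- on a Lipschitz boundary two intrinsically distant sheets can be Euclidean-close --- so pairs with $|x-y|$ bounded below must be split off and estimated separately by $C\norm{g_i}_{L^2(\Sigma_i)}^2$, which is routine but does not follow from the flattened-chart computation as written.
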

\begin{proof}
Note that $g$ may be rewritten as
\[
g=\chi_{\Sigma_1} ({\rm Ext}_{\Sigma_1} g_1) +\chi_{\Sigma_2} ({\rm Ext}_{\Sigma_2} g_2).
\]
Thus, the result follows from part 3 of Lemma~\ref{lem:traces} and the well-known fact that the characteristic function of the half space $\Rm^2_+$ is a multiplier for the space $H^s(\Rm^2)$ if and only if $s<\frac{1}{2}$ \cite[Corollary 3.5.1]{mazya}.
\end{proof}

\section{The Conductivity Equation with High Contrast}
\label{sec:highc}
We now consider the high-contrast problem with constant high conductivity equal to $\eta^{-1}$ in the handles $\Xii$ and unit conductivity in the rest of $X$. We generalize the results of \cite{caloz2010uniform} to the case of two inclusions (handles) that touch the boundary and are allowed to touch each other on a set of zero two-dimensional measure. We study the Dirichlet case in section~\ref{sub:dir}  and the Neumann case  in section~\ref{sec:Nasymp}.

Let $X\subset\Rm^3$ be a bounded and Lipschitz domain with boundary $\partial X$. Let $\Xu,\Xd\subset X$ be two disjoint (possibly not connected) Lipschitz subdomains, and we set $D^i = (\partial \Xii\cap \partial X)^\circ$, $N^i = \partial \Xii\setminus\overline{D^i}$, $X_-=\Xu\cup \Xd$ and $X_+=X\setminus\overline{X_-}$. Assume that for $ i=1,2$
\begin{subequations}
\begin{align}
 & D^i\neq\emptyset, \label{eq:inclusion-boundary-2}\\
  & \mathcal{H}_2(\partial \Xu\cap \partial \Xd)=0, \label{eq:inclusion-boundary-3}\\
  & \text{$\Xii$, with boundary decomposition given by $D^i$ and $N^i$, is creased,}\label{eq:inclusion-boundary-4}\\
  & \text{each connected component of $\Xii$ and $X_+$ has a connected boundary,}
\end{align}
\end{subequations}
where $\mathcal{H}_2$ denotes the two-dimensional Haussdorf measure. In addition to the assumption that the inclusions actually touch the boundary, we are assuming that the intersection of their boundaries is of measure zero with respect to the boundary measure. (See Figure~\ref{fig:domain0} for an example, and Figure~\ref{fig:domain} for a more involved example where $\Xii=X^i_{\rho}$.) In essence, condition \eqref{eq:inclusion-boundary-4} means that the angle between $\partial \Xii$ and $\partial X$ is smaller than $\pi$. The unit normal $\nu$ is oriented outward $X$ on $\partial X$ and outward $\Xii$ on $\partial \Xii$, thereby pointing inward $X_+$ on $N^i$, as in Figure~\ref{fig:domain0}.

\begin{figure}
\begin{center}
\includegraphics[width=.831\textwidth]{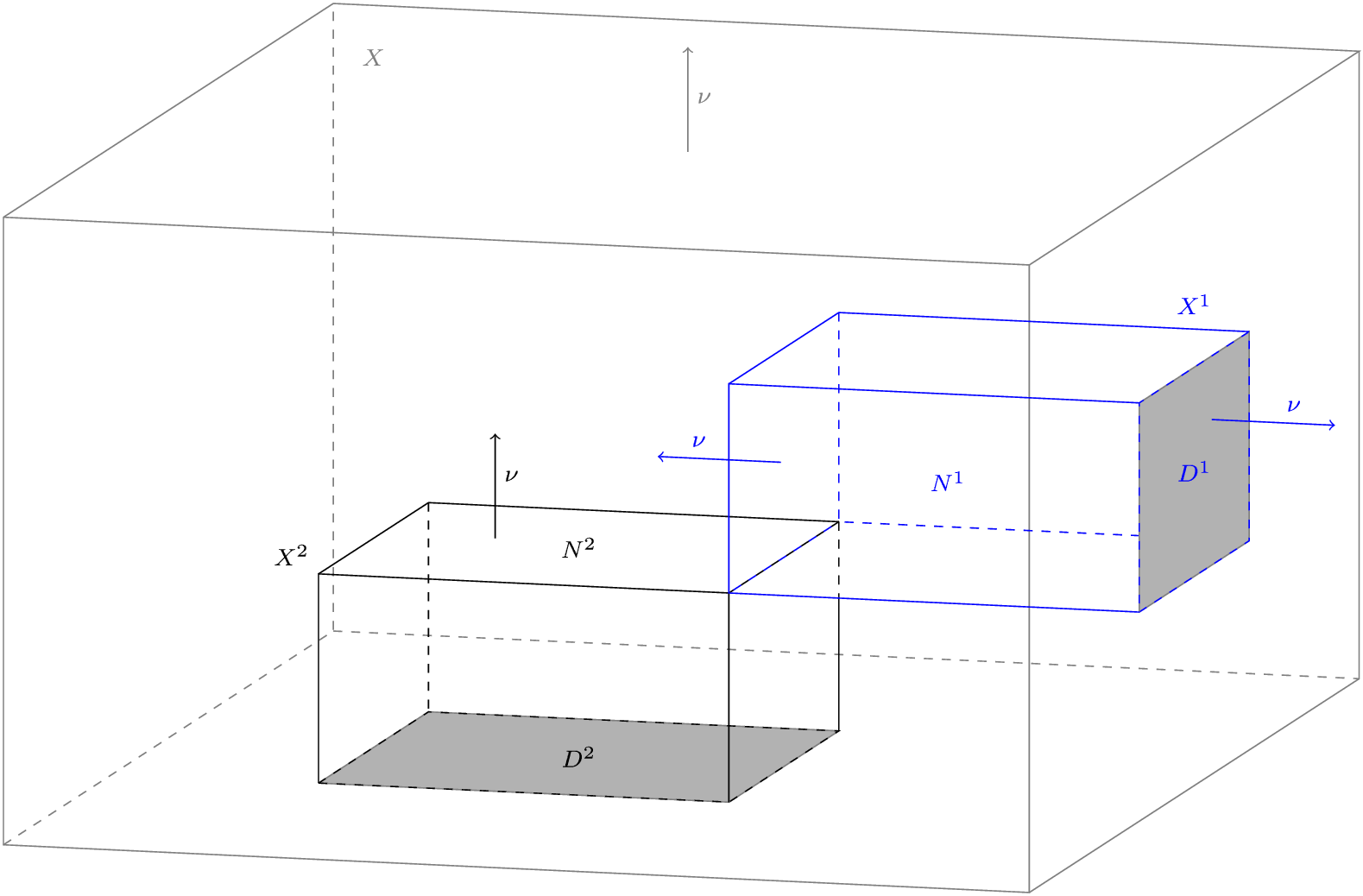}
\caption{A possible configuration of the domain $X$ and the inclusions $\Xu$ and $\Xd$. The shaded parts of the boundary represent $D_1$ and $D_2$, while the internal part of the boundary of the inclusions is formed by $N_1$ and $N_2$.\label{fig:domain0}}
\end{center}
\end{figure}

For $\eta\in(0,1)$, define the conductivity $\sigma_\eta\in L^\infty (X)$ by
\[
 \sigma_\eta=\begin{cases}
              1 & \text{in $X_+$,} \\
              \eta^{-1} & \text{in $X_-$.}
             \end{cases}
\]
\subsection{Dirichlet Boundary Conditions}\label{sub:dir}

For $g\in H^{\frac 12}(\partial X)$ let $u_\eta\in H^1(X)$ be the unique solution to
\begin{equation}\label{eq:u_eta-2}
 -\nabla\cdot\sigma_\eta\nabla u_\eta =0 \quad \mbox{ in } X,\qquad u_\eta=g \quad\mbox{ on } \partial X.
\end{equation}
We are interested in the limit of $u_\eta$ as $\eta\to 0$, i.e., as the conductivity of the inclusions tends to infinity. Let us denote the restriction of a function $\phi$ to $\Xii$ ($X_+$) by $\phi^{i}$ ($\phi^+$). Then we have:
\begin{proposition}\label{prop:asymptotic-2}
Under the above assumptions, there exist $C,\eta_0>0$ and $\delta\in (0,\frac{1}{2})$ depending only on $X$, $\Xu$ and $\Xd$ such that for every $\eta\in (0,\eta_0]$ there holds
\begin{align*}
& \left\Vert u^{i}_\eta - u_0^{i} \right\Vert_{H^{1-\delta}(\Xii)}\le C \left\Vert g\right\Vert_{H^{1/2}(\partial X)}\eta,\\
 & \left\Vert u^+_\eta - u_0^+ \right\Vert_{H^{1-\delta}(X_+)}\le C \left\Vert g\right\Vert_{H^{1/2}(\partial X)}\eta,
\end{align*}
where $u_0^{i}$ and $u_0^+$ are the unique solutions to the problems
\[
 \left\{ \begin{array}{ll}
         \Delta u_0^{i}=0&\quad\text{in $\Xii$,}\\
         u_0^{i}=g&\quad\text{on $D^i$,}\\
        \partial_\nu u_0^{i}=0 &\quad\text{on $N^i$,}
        \end{array}
        \right.
        \qquad
         \left\{ \begin{array}{ll}
         \Delta u_0^+=0&\quad\text{in $X_+$,}\\
         u_0^+=g&\quad\text{on $\partial X_+\cap \partial X$,}\\
         u_0^+=u_0^{i} &\quad\text{on $N^i$, $i=1,2$.}
        \end{array}
        \right.
\]
\end{proposition}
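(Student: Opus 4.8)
The plan is to compare $u_\eta$ with the piecewise limit $u_0$, defined by $u_0=u_0^{i}$ in $\Xii$ and $u_0=u_0^+$ in $X_+$, and to exploit the following dichotomy: although the weighted energy $\int_X\sigma_\eta|\nabla u_\eta|^2$ diverges as $\eta\to0$ (each inclusion is forced to carry the non-constant datum $g$ on $D^i$ while its conductivity blows up), the energy of the error $w_\eta=u_\eta-u_0$ stays bounded. First I would record that the limit problems are well-posed with $u_0^{i},u_0^+\in H^1$: the interior problems are Zaremba problems solved by Proposition~\ref{prop:zaremba} (each $\Xii$ being creased by \eqref{eq:inclusion-boundary-4}), the trace $u_0^{i}|_{N^i}\in H^{1/2}(N^i)$ glues with $g$ into an admissible Dirichlet datum on $\partial X_+$ via Lemma~\ref{lem:jumps}, and the normal fluxes match by construction ($\partial_\nu u_0^{i}=0$ on $N^i$), so that $w_\eta\in H^1_0(X)$.

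The crux is a uniform energy estimate. Testing the equations for $u_\eta$ and $u_0$ against $w_\eta$ and using $\partial_\nu u_0^{i}=0$ on $N^i$, I would derive the identity
\[
\int_X\sigma_\eta|\nabla w_\eta|^2 = -\sum_{i=1,2}\int_{N^i}(\partial_\nu u_0^+)\,w_\eta\,ds,
\]
whose right-hand side is exactly the flux defect across $N^i$ left uncompensated by the limit. Bounding $\norm{\partial_\nu u_0^+}_{H^{-1/2}(N^i)}\le C\norm{g}_{H^{1/2}(\partial X)}$ (Lemma~\ref{lem:traces}, part 4, with $s=1$) and $\norm{w_\eta}_{H^{1/2}(N^i)}\le C\norm{w_\eta^+}_{H^1(X_+)}\le C\norm{\nabla w_\eta^+}_{L^2(X_+)}$ (trace theorem and a Poincaré inequality, since $w_\eta=0$ on $\partial X_+\cap\partial X$), and absorbing $\norm{\nabla w_\eta^+}_{L^2(X_+)}^2\le\int_X\sigma_\eta|\nabla w_\eta|^2$, I obtain $\int_X\sigma_\eta|\nabla w_\eta|^2\le C\norm{g}^2_{H^{1/2}(\partial X)}$ uniformly in $\eta$. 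In particular $\norm{u_\eta^+}_{H^1(X_+)}\le C\norm{g}_{H^{1/2}(\partial X)}$ uniformly, even though $\int_{X_-}\sigma_\eta|\nabla u_\eta|^2$ blows up.

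With this uniform control I would then upgrade to the sharp $O(\eta)$ rate. The flux-matching relation $\eta^{-1}\partial_\nu u_\eta^{i}=\partial_\nu u_\eta^+$ on $N^i$ together with $\partial_\nu u_0^{i}=0$ shows that the interior error $w_\eta^{i}$ is harmonic in $\Xii$, vanishes on $D^i$, and satisfies $\partial_\nu w_\eta^{i}=\eta\,\partial_\nu u_\eta^+$ on $N^i$. Since $\norm{\partial_\nu u_\eta^+}_{H^{-1/2-\delta}(N^i)}\le C\norm{u_\eta^+}_{H^{1-\delta}(X_+)}\le C\norm{g}_{H^{1/2}(\partial X)}$ by Lemma~\ref{lem:traces} and the uniform bound above, Proposition~\ref{prop:zaremba} (with $s=1-\delta$) yields $\norm{w_\eta^{i}}_{H^{1-\delta}(\Xii)}\le C\eta\norm{g}_{H^{1/2}(\partial X)}$, which is the first estimate. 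Finally, $w_\eta^+$ is harmonic in $X_+$, vanishes on $\partial X_+\cap\partial X$, and equals $w_\eta^{i}$ on $N^i$; gluing this boundary datum with Lemma~\ref{lem:jumps} and solving the pure Dirichlet problem (Proposition~\ref{prop:zaremba} with $N=\emptyset$) gives $\norm{w_\eta^+}_{H^{1-\delta}(X_+)}\le C\norm{w_\eta^{i}}_{H^{1-\delta}(\Xii)}\le C\eta\norm{g}_{H^{1/2}(\partial X)}$, the second estimate.

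The main obstacle is the uniform energy bound: one cannot simply control $\int_X\sigma_\eta|\nabla u_\eta|^2$, which diverges, so the argument must be organized around the error $w_\eta$ and around absorbing the boundary pairing into the exterior gradient norm. A secondary technical point is the low-regularity bookkeeping of the Neumann trace $\partial_\nu u_\eta^+$ restricted to the patch $N^i$: working at the level $s=1-\delta<1$, rather than at the endpoint $s=1$, is what makes the restriction to the patch and the application of the creased-domain Zaremba estimate of Proposition~\ref{prop:zaremba} legitimate, and is precisely the reason the convergence is stated in $H^{1-\delta}$ rather than in $H^1$.
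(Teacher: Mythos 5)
Your argument breaks down at its first step, and in a way the paper itself flags. You claim that $u_0^+\in H^1(X_+)$ because the trace $u_0^{i}|_{N^i}$ ``glues with $g$ into an admissible Dirichlet datum on $\partial X_+$ via Lemma~\ref{lem:jumps}'', and hence that $w_\eta=u_\eta-u_0\in H^1_0(X)$. But Lemma~\ref{lem:jumps} only produces a glued datum in $H^s$ for $s<\frac12$ (characteristic functions are $H^s$ multipliers precisely when $s<\frac12$), and in general the glued datum is genuinely \emph{not} in $H^{\frac12}(\partial X_+)$: wherever $\overline{N^1}$ and $\overline{N^2}$ meet --- which is allowed by \eqref{eq:inclusion-boundary-3} and is exactly what happens in the paper's application, where the two handles share the edge circles of the cylinder $Z$ --- the datum jumps from $u_0^{1}$ to $u_0^{2}$. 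This is precisely the content of the Remark following Proposition~\ref{prop:asymptotic-2}: one cannot take $\delta=0$ because $u_0^+\notin H^1(X_+)$. Consequently $\nabla w_\eta\notin L^2(X)$, the energy $\int_X\sigma_\eta|\nabla w_\eta|^2\,dx$ is not finite, your identity $\int_X\sigma_\eta|\nabla w_\eta|^2\,dx=-\sum_i\int_{N^i}(\partial_\nu u_0^+)\,w_\eta\,ds$ is meaningless, and the boundary pairing is itself illegitimate: Lemma~\ref{lem:traces}, part 4, with $s=1$ would again require $u_0^+\in H^1(X_+)$, whereas in truth $\partial_\nu u_0^+$ only lies in $H^{-\frac12-\delta}(N^i)$, which cannot be paired with $w_\eta\in H^{\frac12}(N^i)$. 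Worse, your intermediate conclusion $\norm{u_\eta^+}_{H^1(X_+)}\le C\norm{g}_{H^{1/2}(\partial X)}$ uniformly in $\eta$ is false, not merely unproven: if it held, a weak $H^1(X_+)$ limit along a subsequence would have to coincide with the $H^{1-\delta}$ limit $u_0^+$, forcing $u_0^+\in H^1(X_+)$, a contradiction. So no uniform $H^1$ energy estimate can serve as the crux; the blow-up of $\norm{u_\eta^+}_{H^1(X_+)}$ near the contact curves is an intrinsic feature of the problem.

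Your second half is sound and is, in essence, one step of the paper's own scheme: $w_\eta^{i}$ solves a Zaremba problem on $\Xii$ with Neumann datum $\eta\,\partial_\nu u_\eta^+$, and $w_\eta^+$ solves a Dirichlet problem with datum $w_\eta^{i}$ glued to zero, all at the level $s=1-\delta$ where Lemma~\ref{lem:jumps}, Lemma~\ref{lem:traces} and Proposition~\ref{prop:zaremba} legitimately apply. What it needs is a uniform-in-$\eta$ bound on $\norm{u_\eta^+}_{H^{1-\delta}(X_+)}$, which you derived only through the defective energy step. The repair is to obtain that bound directly at the $H^{1-\delta}$ level by absorption: applying Proposition~\ref{prop:zaremba} and Lemma~\ref{lem:traces} to the coupled system satisfied by the restrictions of $u_\eta$ (which lie in $H^1\subset H^{1-\delta}$, so all norms are finite a priori) gives $\norm{u_\eta^{i}}_{H^{1-\delta}(\Xii)}\le C\bigl(\norm{g}_{H^{1/2}(\partial X)}+\eta\norm{u_\eta^+}_{H^{1-\delta}(X_+)}\bigr)$ and $\norm{u_\eta^+}_{H^{1-\delta}(X_+)}\le C\bigl(\norm{g}_{H^{1/2}(\partial X)}+\sum_i\norm{u_\eta^{i}}_{H^{1-\delta}(\Xii)}\bigr)$, so for $\eta$ below a threshold depending only on $C$ the $\eta$-term is absorbed. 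This is exactly what the paper does, organized as a Neumann series: it expands $u_\eta=\sum_n u_n\eta^n$, proves geometric decay $\norm{u_n}\le C^n\norm{g}_{H^{1/2}(\partial X)}$ of the coefficients at the fixed regularity $H^{1-\delta}$, and sums for $\eta\le\eta_0=1/(2C)$, thereby never requiring any a priori uniform bound on $u_\eta$ itself.
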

\begin{remark}
Note that we cannot take $\delta=0$, since for instance the boundary condition for $u^+_0$ has jumps, and so $u^+_0\notin H^1(X_+)$.
\end{remark}
\begin{remark}
In view of the Hopf lemma, the limiting solution in $\Xii$ satisfies
\[
 \inf_{D^i} g\le u_0^{i} \le  \sup_{D^i} g.
\]
This shows that the values of $u_0^{i}$ are controlled by the boundary conditions.
\end{remark}
We now prove Proposition~\ref{prop:asymptotic-2}, following the argument given in \cite{caloz2010uniform}, which we refer to for additional details.
\begin{proof}
For $i=1,2$, let $\delta^i\in(0,\frac{1}{2})$ be given by Proposition~\ref{prop:zaremba} for the set  $\Xii$ and the decomposition of the boundary given by $D^i$ and $N^i$ (cfr.\ Figure~\ref{fig:domain0}). Similarly, let $\delta^3\in(0,\frac{1}{2})$ be given by Proposition~\ref{prop:zaremba} for the set  $X_+$ and the decomposition of the boundary given by $\partial X_+$ and $\emptyset$ ($\delta^3=\frac{1}{4}$). Set $\delta=\min(\delta^1,\delta^2,\delta^3)$. For simplicity of notation, we denote $\Gamma=\partial X\setminus \partial X_-$.
Several different constants depending only on $\delta$, $X$, $\Xu$ and $\Xd$ will be denoted by $C$.

Problem \eqref{eq:u_eta-2} is equivalent to
\[
 \left\{ \begin{array}{ll}
         \Delta u_\eta^{i}=0&\quad\text{in $\Xii$,}\\
         u_\eta^{i}=g&\quad\text{on $D^i$,}\\
        \partial_\nu u_\eta^{i}=\eta \partial_\nu u_\eta^+ &\quad\text{on $N^i$,}
        \end{array}
        \right.
        \qquad
         \left\{ \begin{array}{ll}
         \Delta u_\eta^+=0&\quad\text{in $X_+$,}\\
         u_\eta^+=g&\quad\text{on $\Gamma$,}\\
         u_\eta^+=u_\eta^{i} &\quad\text{on $N^i$, $i=1,2$.}
        \end{array}
        \right.
\]
We look for solutions given by the asymptotic expansions
\begin{equation}\label{eq:asymptotic-2}
  u_\eta^+=\sum_{n=0}^\infty u^+_n \eta^n\quad \text{in $X_+$},\qquad
  u_\eta^{i}=\sum_{n=0}^\infty u^{i}_n \eta^n\quad \text{in $\Xii$.}
\end{equation}
The convergence of these series will be proved later. Inserting this ansatz into the above systems and identifying the same powers of $\eta$ we obtain
\[
         \left\{ \begin{array}{ll}
         \Delta u_n^+=0&\quad\text{in $X_+$,}\\
         u_n^+=\delta_0(n)\, g&\quad\text{on $\Gamma$,}\\
         u_n^+=u_n^{i} &\quad\text{on $N^i$, $i=1,2$.}
        \end{array}
        \right.
       \;
        \left\{ \begin{array}{ll}
         \Delta u_n^{i}=0&\quad\text{in $\Xii$,}\\
         u_n^{i}=\delta_0(n)\, g&\quad\text{on $D^i$,}\\
        \partial_\nu u_n^{i}=(1-\delta_0(n))\, \partial_\nu u_{n-1}^+ &\quad\text{on $N^i$,}
        \end{array}
        \right.
\]
with $\delta_0(0)=1$ and $\delta_0(n)=0$ for $n\geq1$.
Note that, by \eqref{eq:inclusion-boundary-3}, the boundary conditions set above follow from the identities $\partial \Xii=\overline{D^i}\cup \overline{N^i}$ and $\partial X_+ = \overline{\Gamma} \cup \overline{N^1}\cup\overline{N^2}$.

By Proposition~\ref{prop:zaremba} (applied to $X_+$ and the decomposition of the boundary given by $\partial X_+$ and $\emptyset$) and Lemma~\ref{lem:jumps} we have that the problem for $u^+_n$ is well-posed and that for $n\ge 0$ we have
\begin{equation*}\label{eq:u0+}
  \begin{split}
 \left\Vert u_n^+\right\Vert_{H^{1-\delta}(X_+)}&\le C \norm{\delta_0(n)\chi_{\Gamma}g+\chi_{N^1}u_n^{1}+\chi_{N^2}u_n^{2}}_{H^{1/2-\delta}(\partial X_+)}\\
  &\le C\left(\delta_0(n) \left\Vert  g \right\Vert_{H^{1/2}(\partial X)}+\sum_{i=1}^2 \left\Vert  u_{n}^{i}\right\Vert_{H^{1/2-\delta}(N^i)} \right).
  \end{split}
  \end{equation*}
 Thus, Lemma~\ref{lem:traces}, part 1, yields
  \begin{equation}\label{eq:first_system}
  \left\Vert u_n^+\right\Vert_{H^{1-\delta}(X_+)} \le C\left(\delta_0(n) \left\Vert  g \right\Vert_{H^{1/2}(\partial X)}+ \sum_{i=1}^2  \left\Vert u_{n}^{i}\right\Vert_{H^{1-\delta}(\Xii)}\right),\qquad n\ge 0.
  \end{equation}

  Similarly, by Proposition~\ref{prop:zaremba} (applied to $\Xii$ and the decomposition of the boundary given by $D^i$ and $N^i$) and Lemma~\ref{lem:traces}, part 4, we have
 \begin{align*}\label{eq:u0-}
 &  \left\Vert u_0^{i}\right\Vert_{H^{1-\delta}(\Xii)}\le C \left\Vert g\right\Vert_{H^{1/2-\delta}(D^i)}\le C  \left\Vert g \right\Vert_{H^{1/2}(\partial X)},\\
&  \left\Vert u_n^{i}\right\Vert_{H^{1-\delta}(\Xii)}\le C \left\Vert \partial_\nu u_{n-1}^+\right\Vert_{H^{-1/2-\delta}(N^i)}\le C  \left\Vert u_{n-1}^+\right\Vert_{H^{1-\delta}(X_+)},\qquad n\ge 1,\nonumber
\end{align*}
whence, by \eqref{eq:first_system}, we have
\begin{equation*}\label{eq:uones}
 \sum_{i=1}^2  \left\Vert u_{1}^{i}\right\Vert_{H^{1-\delta}(\Xii)} \le C \left\Vert u_{0}^+\right\Vert_{H^{1-\delta}(X_+)} \le C \left\Vert  g \right\Vert_{H^{1/2}(\partial X)}
\end{equation*}
and
\[
 \sum_{i=1}^2 \left\Vert u_{n}^{i}\right\Vert_{H^{1-\delta}(\Xii)}  \le C\sum_{i=1}^2  \left\Vert u_{n-1}^{i}\right\Vert_{H^{1-\delta}(\Xii)},\qquad n\ge 2.
\]
Thus, using again \eqref{eq:first_system} we obtain
\begin{equation}\label{eq:geometric}
\begin{aligned}
  &  \sum_{i=1}^2 \left\Vert u_{n+1}^{i}\right\Vert_{H^{1-\delta}(\Xii)}  \le C^n \sum_{i=1}^2  \left\Vert u_{1}^{i}\right\Vert_{H^{1-\delta}(\Xii)} \le C^{n+1} \left\Vert  g \right\Vert_{H^{1/2}(\partial X)},\\
  &\left\Vert u_{n+1}^+\right\Vert_{H^{1-\delta}(X_+)} \le C^{n+1} \sum_{i=1}^2  \left\Vert u_{1}^{i}\right\Vert_{H^{1-\delta}(\Xii)}  \le C^{n+1} \left\Vert  g \right\Vert_{H^{1/2}(\partial X)}
\end{aligned}
\end{equation}
for every $ n\ge 0$.

Define now $\eta_0=1/(2 C)$ and take $\eta\in (0,\eta_0]$. The above estimates show that $u^+_\eta$ and $u^{i}_\eta$ in \eqref{eq:asymptotic-2} are well defined.  By \eqref{eq:asymptotic-2} we can write $u_\eta^+-u_0^+=\eta \sum_{n=0}^\infty u^+_{n+1} \eta^{n}$ and $u_\eta^{i}-u_0^{i}=\eta \sum_{n=0}^\infty u^{i}_{n+1} \eta^{n}$. Hence, by \eqref{eq:geometric} we obtain
\begin{align*}
 &\left\Vert u_\eta^+-u_0^+\right\Vert_{H^{1-\delta}(X_+)}\le \eta \sum_{n=0}^\infty\eta^{n} \left\Vert u^+_{n+1}\right\Vert_{H^{1-\delta}(X_+)}\le C \eta \left\Vert  g \right\Vert_{H^{1/2}(\partial X)} \sum_{n=0}^\infty (C \eta)^n,\\
  &\left\Vert u_\eta^{i}-u_0^{i}\right\Vert_{H^{1-\delta}(\Xii)}\le \eta \sum_{n=0}^\infty\eta^{n} \left\Vert u^{i}_{n+1}\right\Vert_{H^{1-\delta}(\Xii)}\le C \eta \left\Vert  g \right\Vert_{H^{1/2}(\partial X)}  \sum_{n=0}^\infty (C \eta)^n.
\end{align*}
For $\eta\in (0,\eta_0]$, this implies
\begin{align*}
 &\left\Vert u_\eta^+-u_0^+\right\Vert_{H^{1-\delta}(X_+)}\le 2C \eta \left\Vert  g \right\Vert_{H^{1/2}(\partial X)},\\
 & \left\Vert u_\eta^{i}-u_0^{i}\right\Vert_{H^{1-\delta}(\Xii)}\le 2C \eta  \left\Vert  g \right\Vert_{H^{1/2}(\partial X)},
\end{align*}
as desired.
\end{proof}

\subsection{Neumann Boundary Conditions}
\label{sec:Nasymp}
We adapt here the results of the previous subsection to the case of Neumann boundary conditions. We make the same assumptions on $X$ and $\Xii$, and for simplicity we assume in addition that $X$ and $\Xii$ are connected for $i=1,2$. The conductivity $\sigma_\eta$ is defined as before, namely
\[
 \sigma_\eta=\begin{cases}
              1 & \text{in $X_+$,} \\
             \eta^{-1} & \text{in $X_-$.}
             \end{cases}
\]
Fix $x^1\in D^1$. For $g\in H^{-1/2}(\partial X)$ such that $\int_{\partial X} g\,ds = 0$, let $u_\eta\in H^1(X)$ be the unique solution to
\begin{equation}\label{eq:u_eta-neumann}
 -\nabla\cdot\sigma_\eta\nabla u_\eta =0 \quad \mbox{ in } X,\qquad \sigma_\eta\partial_\nu u_\eta=g \quad\mbox{ on } \partial X,\qquad u_\eta(x^1) =0.
\end{equation}
The last condition is set to enforce uniqueness. We are interested in the limit of $u_\eta$ as $\eta\to 0$, i.e.\ as the conductivity of the inclusions tends to infinity.
\begin{proposition}\label{prop:asymptotic-neumann}
Under the above assumptions, there exist $C,\eta_0>0$ and $\delta\in (0,\frac{1}{2})$ depending only on $X$, $\Xu$ and $\Xd$ such that for every $\eta\in (0,\eta_0]$ there holds
\begin{equation}\label{eq:neumann-convergence}
\begin{aligned}
& \left\Vert u^{i}_\eta - \beta_i \right\Vert_{H^{1-\delta}(\Xii)}\le C \left\Vert g\right\Vert_{H^{-1/2}(\partial X)}\eta,\\& \left\Vert u^+_\eta - u_0^+ \right\Vert_{H^{1-\delta}(X_+)}\le C \left\Vert g\right\Vert_{H^{-1/2}(\partial X)}\eta,
 \end{aligned}
\end{equation}
where
\[
\beta_1=0,\qquad \beta_2=-\left(\int_{N^2} \partial_\nu v\,ds\right)^{-1} \left( \int_{D^1} g\,ds+\int_\Gamma vg\,ds\right)
\]
and $v$ and $u_0^+$ are the unique solutions to the problems
\[
  \left\{ \begin{array}{ll}
         \Delta v=0&\quad\text{in $X_+$,}\\
       \partial_\nu v=0 &\quad\text{on $\partial X_+\cap \partial X$,}\\
         v=1 &\quad\text{on $N^1$,}\\
         v=0 &\quad\text{on $N^2$,}
        \end{array}
        \right.
        \qquad
         \left\{ \begin{array}{ll}
         \Delta u_0^+=0&\quad\text{in $X_+$,}\\
         \partial_\nu u_0^+=g&\quad\text{on $\partial X_+\cap \partial X$,}\\
         u_0^+=\beta_i &\quad\text{on $N^i$, $i=1,2$.}
        \end{array}
        \right.
\]
\end{proposition}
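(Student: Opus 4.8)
The plan is to follow closely the proof of Proposition~\ref{prop:asymptotic-2}, replacing the Dirichlet datum on $\partial X$ by the Neumann datum and accounting for the new phenomenon that the leading order in the inclusions is a pair of \emph{constants} fixed by global solvability. First I would rewrite \eqref{eq:u_eta-neumann} as a coupled transmission problem: since $\sigma_\eta=\eta^{-1}$ in $X_-$, the condition $\sigma_\eta\partial_\nu u_\eta=g$ reads $\partial_\nu u^{i}_\eta=\eta g$ on $D^i$ and $\partial_\nu u^+_\eta=g$ on $\Gamma$, while continuity of $u$ and of the flux $\sigma_\eta\partial_\nu u$ across $N^i$ gives $u^{i}_\eta=u^+_\eta$ and $\partial_\nu u^{i}_\eta=\eta\,\partial_\nu u^+_\eta$ on $N^i$ (with $\nu$ pointing out of $\Xii$). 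Inserting the ansatz $u^+_\eta=\sum_{n\ge0}u^+_n\eta^n$, $u^{i}_\eta=\sum_{n\ge0}u^{i}_n\eta^n$ and matching powers of $\eta$ produces, for each $n$, a Zaremba problem for $u^+_n$ in $X_+$ (Neumann datum $\delta_0(n)\,g$ on $\Gamma$, Dirichlet datum $u^{i}_n$ on $N^i$) coupled to a \emph{pure} Neumann problem for $u^{i}_n$ in $\Xii$ (datum $g$ on $D^i$ at order $n=1$ only, and $\partial_\nu u^+_{n-1}$ on $N^i$ for $n\ge1$).

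At order $n=0$ the inclusion problem is $\Delta u^{i}_0=0$ in $\Xii$ with zero Neumann data on all of $\partial\Xii$; since $\Xii$ is connected this forces $u^{i}_0\equiv\beta_i$ for a constant $\beta_i$, and the $X_+$ problem becomes exactly the stated Zaremba problem with boundary values $\beta_1,\beta_2$ on $N^1,N^2$ and Neumann datum $g$ on $\Gamma$, well posed by Proposition~\ref{prop:zaremba}. The normalization $u_\eta(x^1)=0$ with $x^1\in D^1$ pins the leading constant in $\Xu$, giving $\beta_1=0$; the remaining additive constants of the higher-order inclusion solutions I would normalize by $u^{1}_n(x^1)=0$.

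The constant $\beta_2$ is not free. At order $n=1$ the problem for $u^1_1$ in $\Xu$ is again a pure Neumann problem, solvable if and only if its total flux vanishes, i.e. $\int_{D^1}g\,ds+\int_{N^1}\partial_\nu u^+_0\,ds=0$. To turn this into the stated formula I would apply Green's identity to $u^+_0$ and the auxiliary harmonic function $v$ in $X_+$; since $\nu$ points \emph{into} $X_+$ on $N^i$, and using $v=1$ on $N^1$, $v=0$ on $N^2$, $\partial_\nu v=0$ on $\Gamma$, $u^+_0=\beta_1=0$ on $N^1$, $u^+_0=\beta_2$ on $N^2$, this yields $\int_\Gamma vg\,ds-\int_{N^1}\partial_\nu u^+_0\,ds+\beta_2\int_{N^2}\partial_\nu v\,ds=0$. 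Eliminating $\int_{N^1}\partial_\nu u^+_0\,ds$ via the solvability condition gives precisely $\beta_2=-\bigl(\int_{N^2}\partial_\nu v\,ds\bigr)^{-1}\bigl(\int_{D^1}g\,ds+\int_\Gamma vg\,ds\bigr)$; the denominator is nonzero because $\int_{N^2}\partial_\nu v\,ds=\int_{X_+}\abs{\nabla v}^2\,dx>0$, as $v$ is nonconstant. The companion solvability condition from $\Xd$ need not be imposed separately: adding the two conditions and using the global flux identity $\int_\Gamma g\,ds=\int_{N^1}\partial_\nu u^+_0\,ds+\int_{N^2}\partial_\nu u^+_0\,ds$ reduces it to $\int_{\partial X}g\,ds=0$, which holds by hypothesis. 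The same mechanism fixes the free constant of $u^{2}_n$ at each higher order, through the order-$(n+1)$ solvability in $\Xu$.

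Finally I would establish the geometric estimates $\norm{u^{i}_n}_{H^{1-\delta}(\Xii)}+\norm{u^+_n}_{H^{1-\delta}(X_+)}\le C^{\,n}\norm{g}_{H^{-1/2}(\partial X)}$, with $\delta$ as in Proposition~\ref{prop:zaremba}, by alternating the Zaremba estimate for $u^+_n$ (together with Lemma~\ref{lem:jumps} to handle the jump of its Dirichlet datum across $N^1$ and $N^2$) with the Neumann estimate for $u^{i}_n$ (using Lemma~\ref{lem:traces}, part 4, to bound $\partial_\nu u^+_{n-1}$ in $H^{-1/2-\delta}(N^i)$ by $\norm{u^+_{n-1}}_{H^{1-\delta}(X_+)}$), exactly as in the proof of Proposition~\ref{prop:asymptotic-2}. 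Taking $\eta_0=1/(2C)$ and summing the resulting geometric series gives the claimed $O(\eta)$ bounds, with the leading constants $\beta_i$ controlled by $\norm{g}_{H^{-1/2}(\partial X)}$ via the explicit formula above. I expect the main obstacle to be precisely the feature absent in the Dirichlet case: the inclusion subproblems now carry the constants as a kernel, so one must (i) verify that the Neumann compatibility conditions close up consistently at every order, which is exactly where $\int_{\partial X}g\,ds=0$ enters, and (ii) control the undetermined additive constants uniformly in $n$ through solvability formulas of the type derived for $\beta_2$ together with the normalization $u^{1}_n(x^1)=0$, so that the geometric-series argument of Proposition~\ref{prop:asymptotic-2} still applies.
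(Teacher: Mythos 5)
Your proposal is correct and follows essentially the same route as the paper: the same transmission reformulation, power-series ansatz, determination of $\beta_1$ by the normalization and of $\beta_2$ (and the higher-order constants $a_n$) via the Neumann compatibility condition in $\Xu$ combined with Green's identity against the auxiliary function $v$, and the same geometric-series estimates borrowed from Proposition~\ref{prop:asymptotic-2}. The only cosmetic difference is that you justify $\int_{N^2}\partial_\nu v\,ds\neq 0$ by the energy identity $\int_{N^2}\partial_\nu v\,ds=\int_{X_+}|\nabla v|^2\,dx>0$, whereas the paper invokes the Hopf lemma; both are valid.
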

\begin{proof}
The proof is similar to that of Proposition~\ref{prop:asymptotic-2}, and so only a sketch will be provided. In particular, precise references to the well-posedness results are omitted.

Problem \eqref{eq:u_eta-neumann} is equivalent to
\[
 \left\{ \begin{array}{ll}
         \Delta u_\eta^{i}=0&\quad\text{in $\Xii$,}\\
         \partial_\nu u_\eta^{i}=\eta g&\quad\text{on $D^i$,}\\
        \partial_\nu u_\eta^{i}=\eta \partial_\nu u_\eta^+ &\quad\text{on $N^i$,}
        \end{array}
        \right.
        \qquad
         \left\{ \begin{array}{ll}
         \Delta u_\eta^+=0&\quad\text{in $X_+$,}\\
         \partial_\nu u_\eta^+=g&\quad\text{on $\Gamma$,}\\
         u_\eta^+=u_\eta^{i} &\quad\text{on $N^i$, $i=1,2$,}
        \end{array}
        \right.
\]
together with the condition $u^{1}_\eta(x^1)=0$.
We look for solutions given by the asymptotic expansions
\begin{equation}\label{eq:asymptotic-neumann}
  u_\eta^+=\sum_{n=0}^\infty u^+_n \eta^n\quad \text{in $X_+$},\qquad
  u_\eta^{i}=\sum_{n=0}^\infty u^{i}_n \eta^n\quad \text{in $\Xii$.}
\end{equation}
Inserting this ansatz into the above systems and identifying the same powers of $\eta$ yields
\[
        \left\{ \begin{array}{ll}
         \Delta u_n^{i}=0&\;\text{in $\Xii$,}\\
         \partial_\nu u_n^{i}=\delta_1(n)\, g&\;\text{on $D^i$,}\\
        \partial_\nu u_n^{i}=(1-\delta_0(n))\, \partial_\nu u_{n-1}^+ &\;\text{on $N^i$,}
        \end{array}
        \right.
        \;
         \left\{ \begin{array}{ll}
         \Delta u_n^+=0&\;\text{in $X_+$,}\\
        \partial_\nu u_n^+=\delta_0(n)\, g&\;\text{on $\Gamma$,}\\
         u_n^+=u_n^{i} &\;\text{on $N^i$, $i=1,2$,}
        \end{array}
        \right.
\]
together with $u^{1}_n(x^1)=0$. These problems should be solved in order following the sequence
\[
u_0^{i} \to u_0^+ \to u_1^{i} \to u_1^+ \to \dots \to  u_n^{i} \to u_n^+  \to u_{n+1}^{i} \to u_{n+1}^+\to\dots
\]
Note that, given $u_n^{i}$, the problem for $u_n^+$ is well-posed and admits a unique solution. Similarly, given $u^+_{n-1}$, the problem for $u_n^{1}$ is uniquely solvable because of the additional condition $u^{1}_n(x^1)=0$. On the other hand, $u_n^{2}$ is  determined up to a constant. In other words, we can write $u^{2}_n=\tilde u^{2}_n+a_n$, where $\tilde u^{2}_n$ is the solution to the problem such that $\tilde u^{2}_n(x^2)=0$ for a fixed $x^2\in D^2$ and $a_n\in\Rm$.  This constant is uniquely determined by imposing that the Neumann boundary conditions for $u_{n+1}^{i}$ have zero mean. (Note that this is automatically satisfied for  $u_{0}^{i}$.) More precisely,  we need to ensure that
\begin{equation}\label{eq:compatible}
\delta_0(n)\int_{D^1} g\,ds+\int_{N^1} \partial_\nu u^+_n\,ds=0.
\end{equation}
Since $g$ has zero mean on $\partial X$, it is enough to consider only this condition, which implies the corresponding identity for $i=2$. The Green's identity yields (note that the normal on $N^i$ is pointing inwards, yielding a sign change):
\[
\begin{split}
0 &= \int_{\partial X^+} u_n^+ \partial_\nu v- v \partial_\nu u_n^+\,ds\\
& =-\int_{N^1} u_n^{1}\partial_\nu v\,ds-\int_{N^2} \tilde u_n^{2}\partial_\nu v\,ds-a_n\alpha+\int_{N^1} \partial_\nu u^+_n\,ds-\delta_0(n)\int_\Gamma vg\,ds,
\end{split}
\]
where $\alpha=\int_{N^2} \partial_\nu v\,ds$.
Therefore, \eqref{eq:compatible} is equivalent to
\[
\delta_0(n)\int_{D^1} g\,ds+\delta_0(n)\int_\Gamma vg\,ds+\int_{N^1} u_n^{1}\partial_\nu v\,ds+\int_{N^2} \tilde u_n^{2}\partial_\nu v\,ds+a_n\alpha=0,
\]
which shows that $a_n$ is uniquely determined, since $\alpha\neq 0$ by the Hopf lemma. In particular, as $u^{1}_0\equiv  0$ and $\tilde u^{2}_0\equiv 0$, we have
\[
u^{2}_0\equiv a_0 = -\alpha^{-1} \left( \int_{D^1} g\,ds+\int_\Gamma vg\,ds\right).
\]

We have shown that all the above problems are well-posed and have unique solutions. Arguing as in the proof of Proposition~\ref{prop:asymptotic-2}, we prove \eqref{eq:neumann-convergence} and the convergence of the expansions given in \eqref{eq:asymptotic-neumann}.
\end{proof}

\appendix\normalsize
\section{The limit solution $u^*$}\label{sec:appendix}

For the sake of completeness (it is not required for the proofs), we  derive an explicit expression for $u^*$, the solution to \eqref{eq:ustar-pde}. The advantage of the cylindrical geometry is that $u^*$ may be expanded over an explicit basis of harmonic functions. Since the solution $u^*$ is piecewise constant on the boundary of the cylinder, its decomposition in that basis still involves an infinite number of terms. Using the symmetries of the geometry, we can analyze these terms and obtain quantitative information about the Hessian at the origin $O$.

We write the Laplacian in cylindrical coordinates $x_1=z$ and $(x_2,x_3)=(r\cos\phi,r\sin\phi)$, with $-\frac H2<z<\frac H2$, $0<r<a$, and $\phi\in[0,2\pi)$. In the text, we chose $H=4$ and $a=1$. Let $u$ equal $u^*-1$ so that it equals $0$ on the lateral disks of the boundary of the cylinder and $1$ elsewhere on the boundary.  Since the geometry is invariant by rotation, we obtain that $u=u(r,z)$ solves $r^{-1}\partial_r r\partial_r u+\partial^2_z u=0$ with $u(-\frac H2,r)=u(\frac H2,r)=0$ and $u(z,a)=1$.  

The function $u$ is symmetric in $z$ and so is harmonic in the cylinder with lateral boundary conditions $u(\pm\frac H2,r)=0$ and  $\partial_z u(0,r)=0$. Writing harmonic solutions with such boundary conditions as $u(z,r)=f(z)g(r)$, we find a (spectral) basis for such functions with basis elements $V_k(r,z)=\cos(\frac{(2k+1)\pi z}H) I_0(\frac{(2k+1)\pi r}H)$ for $k\geq0$. Here, $I_0(r)$ is the modified Bessel function of order $0$. As a consequence, we have the decomposition
\begin{equation}\label{eq:u}
  u(r,z) = \dsum_{k\geq0} u_k \cos(\frac{(2k+1)\pi z}H) I_0 (\frac{(2k+1)\pi r}H).
\end{equation}
We extend $u$ by oddness and by periodicity outside $(-\frac H2,\frac H2)$ so that we have a $2H$ periodic function  even about $0$ and odd about $\frac H2$. Let $h(z)$ be the above extension of the boundary condition $1$, i.e., $h(z)=1$ on $|z|<\frac H2$ and $h(z)=-1$ on $\frac H2<|z|<H$. Evaluating at $r=a$, where $u(z,a)$ equals $h(z)$, we get
\begin{displaymath}
  \int_{-H}^H h(z) \cos(\frac{(2k+1)\pi z}H)\,dz = u_{k} \int_{-H}^H \cos^2(\frac{(2k+1)\pi z} H) \,dz\, I_0(\frac{(2k+1)\pi a}H),
\end{displaymath}
which yields
\begin{displaymath}
   u_k =  \frac{4(-1)^k}{\pi (2k+1) I_0(\frac{(2k+1)\pi a}H)}.
\end{displaymath}


Since $u$ is real-analytic away from the boundary of the domain, we can differentiate \eqref{eq:u} at will. By symmetry, we deduce that $\partial_r u(0,0)=\partial_z u(0,0)=\partial_{rz} u(0,0)=0$. Since $I_0(0)=1$, we also obtain that
\begin{displaymath}
 \pdrr uz (0,z) = -\dsum_{k\geq0} u_{k} \big(\dfrac{(2k+1)\pi}{H}\big)^2  \cos (\frac{(2k+1)\pi z}H).
\end{displaymath}
Evaluating at $z=0$, we obtain 
\[
\frac{\partial^2 u^*}{\partial x_1^2} (O)=\pdrr uz (0,0)= -\frac{4\pi}{H^2}\dsum_{k\geq0} \frac{(-1)^k (2k+1)}{ I_0((2k+1)\pi aH^{-1})}   .
\]
It can be verified that the series on the right hand side is always negative, namely $\partial_{x_1}^2 u^*(O)=-2\lambda$ for some $\lambda>0$. In the other direction, we compute
\begin{displaymath}
   \pdrr ur (r,0) = \dsum_{k\geq0} u_{k} \big(\dfrac{(2k+1)\pi}{H}\big)^2  I''_0 (\frac{(2k+1)\pi r}H),
\end{displaymath}
and, since $I''_0(0) = \frac{1}{2}$, we obtain that $\partial^2_{x_{2,3}} u^*(O)=\partial^2_r u (0,0)=\lambda>0$, as expected.
Figure~\ref{fig:lambda} allows to understand the dependence of $\lambda$ on the geometry of the cylinder.

\begin{figure}

\begin{centering}
\includegraphics[width=.92\textwidth]{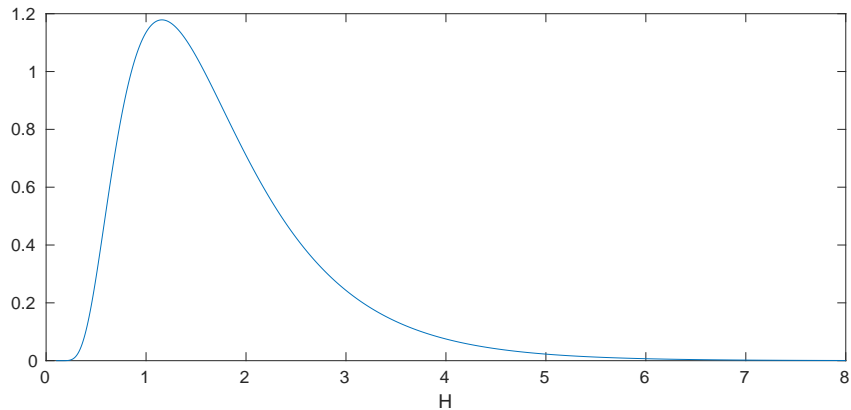}
\caption{\label{fig:lambda}The quantity $\lambda$ as a function of $H$, the height of the cylinder, with $a=1$.}
\end{centering}
\end{figure}

\bibliography{bibliography}

\begin{thebibliography}{10}

\bibitem{alberti-2013}
G.~S. Alberti.
\newblock On multiple frequency power density measurements.
\newblock {\em Inverse Problems}, 29(11):115007, 25, 2013.

\bibitem{alberti-2015b}
G.~S. Alberti.
\newblock Enforcing local non-zero constraints in {PDE}s and applications to
  hybrid imaging problems.
\newblock {\em Comm. Partial Differential Equations}, 40(10):1855--1883, 2015.

\bibitem{alberti-2015}
G.~S. Alberti.
\newblock On multiple frequency power density measurements {II}. {T}he full
  {M}axwell's equations.
\newblock {\em J. Differential Equations}, 258(8):2767--2793, 2015.

\bibitem{alberti-2016}
G.~S. Alberti.
\newblock Absence of {C}ritical {P}oints of {S}olutions to the {H}elmholtz
  {E}quation in 3{D}.
\newblock {\em Arch. Ration. Mech. Anal.}, 222(2):879--894, 2016.

\bibitem{alberti-capdeboscq-2016}
G.~S. Alberti and Y.~Capdeboscq.
\newblock Lectures on elliptic methods for hybrid inverse problems.
\newblock Technical Report 2016-46, Seminar for Applied Mathematics, ETH
  Z{\"u}rich, Switzerland, 2016.

\bibitem{capalb-analytic}
G.~S. Alberti and Y.~Capdeboscq.
\newblock {On local non-zero constraints in PDE with analytic coefficients}.
\newblock In {\em Imaging, Multi-scale and High Contrast Partial Differential
  Equations}, volume 660 of {\em Contemp. Math.}, pages 89--97. Amer. Math.
  Soc., Providence, RI, 2016.

\bibitem{A-AMPA-86}
G.~Alessandrini.
\newblock An identification problem for an elliptic equation in two variables.
\newblock {\em Ann. Mat. Pura Appl. (4)}, 145:265--295, 1986.

\bibitem{alessandrini-2014}
G.~Alessandrini.
\newblock Global stability for a coupled physics inverse problem.
\newblock {\em Inverse Problems}, 30(7):075008, 10, 2014.

\bibitem{AL-DC-FR-VE}
G.~Alessandrini, M.~Di~Cristo, E.~Francini, and S.~Vessella.
\newblock Stability for quantitative photoacoustic tomographywith well-chosen
  illuminations.
\newblock {\em Annali di Matematica Pura ed Applicata (1923 -)}, pages 1--12,
  2016.

\bibitem{alessandrinimagnanini1994}
G.~Alessandrini and R.~Magnanini.
\newblock Elliptic equations in divergence form, geometric critical points of
  solutions, and {S}tekloff eigenfunctions.
\newblock {\em SIAM J. Math. Anal.}, 25(5):1259--1268, 1994.

\bibitem{AN-ARMA-01}
G.~Alessandrini and V.~Nesi.
\newblock Univalent {$\sigma$}-harmonic mappings.
\newblock {\em Arch. Ration. Mech. Anal.}, 158(2):155--171, 2001.

\bibitem{A-Sp-08}
H.~Ammari.
\newblock {\em An Introduction to Mathematics of Emerging Biomedical Imaging},
  volume~62 of {\em Mathematics and Applications}.
\newblock Springer, New York, 2008.

\bibitem{ABCTF-SIAP-08}
H.~Ammari, E.~Bonnetier, Y.~Capdeboscq, M.~Tanter, and M.~Fink.
\newblock Electrical impedance tomography by elastic deformation.
\newblock {\em SIAM J. Appl. Math.}, 68:1557--1573, 2008.

\bibitem{ammari-2013}
H.~Ammari, J.~Garnier, W.~Jing, and L.~H. Nguyen.
\newblock Quantitative thermo-acoustic imaging: an exact reconstruction
  formula.
\newblock {\em J. Differential Equations}, 254(3):1375--1395, 2013.

\bibitem{ancona-2002}
A.~Ancona.
\newblock Some results and examples about the behavior of harmonic functions
  and {G}reen's functions with respect to second order elliptic operators.
\newblock {\em Nagoya Math. J.}, 165:123--158, 2002.

\bibitem{AS-IP-12}
S.~R. Arridge and O.~Scherzer.
\newblock Imaging from coupled physics.
\newblock {\em Inverse Problems}, 28:080201, 2012.

\bibitem{B-IO-12}
G.~Bal.
\newblock {\em {Hybrid inverse problems and internal functionals}}.
\newblock Inside Out II, MSRI Publications, G. Uhlmann Editor, Cambridge
  University Press, Cambridge, UK, 2012.

\bibitem{B-CM-14}
G.~Bal.
\newblock {Hybrid Inverse Problems and Redundant Systems of Partial
  Differential Equations}.
\newblock In P.~Stefanov, A.~Vasy, and M.~Zworski, editors, {\em Inverse
  Problems and Applications}, volume 619 of {\em Contemporary Mathematics},
  pages 15--48. AMS, 2014.

\bibitem{BC-JDE-13}
G.~Bal and M.~Courdurier.
\newblock Boundary control of elliptic solutions to enforce local constraints.
\newblock {\em J. Differential Equations}, 255(6):1357--1381, 2013.

\bibitem{BR-IP-12}
G.~Bal and K.~Ren.
\newblock On multi-spectral quantitative photoacoustic tomography.
\newblock {\em Inverse Problems}, 28:025010, 2012.

\bibitem{BU-IP-10}
G.~Bal and G.~Uhlmann.
\newblock Inverse diffusion theory for photoacoustics.
\newblock {\em Inverse Problems}, 26(8):085010, 2010.

\bibitem{BU-CPAM-13}
G.~Bal and G.~Uhlmann.
\newblock Reconstruction of coefficients in scalar second-order elliptic
  equations from knowledge of their solutions.
\newblock {\em Commun. Pure Appl. Math.}, 66:1629--1652, 2013.

\bibitem{bauman2001univalent}
P.~Bauman, A.~Marini, and V.~Nesi.
\newblock Univalent solutions of an elliptic system of partial differential
  equations arising in homogenization.
\newblock {\em Indiana Univ. Math. J.}, 50(2):747--757, 2001.

\bibitem{BMN-ARMA-04}
M.~Briane, G.~W. Milton, and V.~Nesi.
\newblock Change of sign of the corrector's determinant for homogenization in
  three-dimensional conductivity.
\newblock {\em Arch. Ration. Mech. Anal.}, 173(1):133--150, 2004.

\bibitem{CF-JDE-85}
L.~A. Caffarelli and A.~Friedman.
\newblock Partial regularity of the zero-set of solutions of linear and
  superlinear elliptic equations.
\newblock {\em J. Differential Equations}, 60:420�V433, 1985.

\bibitem{caloz2010uniform}
G.~Caloz, M.~Dauge, and V.~P{\'e}ron.
\newblock Uniform estimates for transmission problems with high contrast in
  heat conduction and electromagnetism.
\newblock {\em J. Math. Anal. Appl.}, 370(2):555--572, 2010.

\bibitem{CA}
Y.~Capdeboscq.
\newblock On a counter-example to quantitative {J}acobian bounds.
\newblock {\em J. \'Ec. polytech. Math.}, 2:171--178, 2015.

\bibitem{CFGK-SJIS-09}
Y.~Capdeboscq, J.~Fehrenbach, F.~{de Gournay}, and O.~Kavian.
\newblock Imaging by modification: numerical reconstruction of local
  conductivities from corresponding power density measurements.
\newblock {\em SIAM J. Imaging Sciences}, 2:1003--1030, 2009.

\bibitem{evans}
L.~C. Evans.
\newblock {\em Partial differential equations}, volume~19 of {\em Graduate
  Studies in Mathematics}.
\newblock American Mathematical Society, Providence, RI, 1998.

\bibitem{GT-2001}
D.~Gilbarg and N.~S. Trudinger.
\newblock {\em Elliptic partial differential equations of second order}.
\newblock Classics in Mathematics. Springer-Verlag, Berlin, 2001.
\newblock Reprint of the 1998 edition.

\bibitem{2014-grafakos}
L.~Grafakos.
\newblock {\em Modern {F}ourier analysis}, volume 250 of {\em Graduate Texts in
  Mathematics}.
\newblock Springer, New York, third edition, 2014.

\bibitem{HaHoHoNa-JDG99}
R.~Hardt, M.~Hoffmann-Ostenhof, T.~Hoffmann-Ostenhof, and N.~Nadirashvili.
\newblock Critical sets of solutions to elliptic equations.
\newblock {\em J. Differential Geom.}, 51:359--373, 1999.

\bibitem{1995-jerison-kenig}
D.~Jerison and C.~E. Kenig.
\newblock The inhomogeneous {D}irichlet problem in {L}ipschitz domains.
\newblock {\em J. Funct. Anal.}, 130(1):161--219, 1995.

\bibitem{1984-jonsson-wallin}
A.~Jonsson and H.~Wallin.
\newblock Function spaces on subsets of {${\bf R}^n$}.
\newblock {\em Math. Rep.}, 2(1):xiv+221, 1984.

\bibitem{MR3289684}
P.~Kuchment.
\newblock Mathematics of hybrid imaging: a brief review.
\newblock In {\em The mathematical legacy of {L}eon {E}hrenpreis}, volume~16 of
  {\em Springer Proc. Math.}, pages 183--208. Springer, Milan, 2012.

\bibitem{KS-IP-12}
P.~Kuchment and D.~Steinhauer.
\newblock Stabilizing inverse problems by internal data.
\newblock {\em Inverse Problems}, 28(8):084007, 2012.

\bibitem{leoni}
G.~Leoni.
\newblock {\em A first course in {S}obolev spaces}, volume 105 of {\em Graduate
  Studies in Mathematics}.
\newblock American Mathematical Society, Providence, RI, 2009.

\bibitem{mazya}
V.~G. Maz'ya and T.~O. Shaposhnikova.
\newblock {\em Theory of {S}obolev multipliers}, volume 337 of {\em Grundlehren
  der Mathematischen Wissenschaften [Fundamental Principles of Mathematical
  Sciences]}.
\newblock Springer-Verlag, Berlin, 2009.
\newblock With applications to differential and integral operators.

\bibitem{MZM-IP-10}
J.~R. McLaughlin, N.~Zhang, and A.~Manduca.
\newblock Calculating tissue shear modulus and pressure by 2{D}
  log-elastographic methods.
\newblock {\em Inverse Problems}, 26(8):085007, 25, 2010.

\bibitem{2007-mitreas}
I.~Mitrea and M.~Mitrea.
\newblock The {P}oisson problem with mixed boundary conditions in {S}obolev and
  {B}esov spaces in non-smooth domains.
\newblock {\em Trans. Amer. Math. Soc.}, 359(9):4143--4182 (electronic), 2007.

\bibitem{NTT-Rev-11}
A.~Nachman, A.~Tamasan, and A.~Timonov.
\newblock Current density impedance imaging.
\newblock {\em Tomography and Inverse Transport Theory. Contemporary
  Mathematics (G. Bal, D. Finch, P. Kuchment, P. Stefanov, G. Uhlmann,
  Editors)}, 559, 2011.

\bibitem{rychkov1999}
V.~S. Rychkov.
\newblock On restrictions and extensions of the besov and triebel--lizorkin
  spaces with respect to lipschitz domains.
\newblock {\em Journal of the London Mathematical Society}, 60(1):237--257,
  1999.

\bibitem{S-SP-2011}
O.~Scherzer.
\newblock {\em Handbook of Mathematical Methods in Imaging}.
\newblock Springer Verlag, New York, 2011.

\bibitem{widlak-scherzer-2012}
T.~Widlak and O.~Scherzer.
\newblock Hybrid tomography for conductivity imaging.
\newblock {\em Inverse Problems}, 28(8):084008, 28, 2012.

\end{thebibliography}
\bibliographystyle{abbrv}

\end{document}